\newcommand{\bC}{ {\mathbb{C}} }
\newcommand{\bL}{\mathbb{L}}
\newcommand{\bP}{\mathbb{P}}
\newcommand{\bQ}{\mathbb{Q}}
\newcommand{\bR}{\mathbb{R}}
\newcommand{\bZ}{\mathbb{Z}}
\newcommand{\cL}{\mathcal{L}}
\newcommand{\cM}{\mathcal{M}}
\newcommand{\cX}{\mathcal{X}}
\newcommand{\Aut}{\mathrm{Aut}}
\newcommand{\Hom}{\mathrm{Hom}}
\newcommand{\ev}{\mathrm{ev}}
\newcommand{\val}{ {\mathrm{val}} }
\newcommand{\vir}{ {\mathrm{vir}} }
\newcommand{\NE}{{\mathrm{NE}}}
\newcommand{\sh}{\mathsf{h}}
\newcommand{\sw}{\mathsf{w}}
\newcommand{\tphi}{\widetilde{\phi}}
\newcommand{\tC}{\widetilde{C}}
\newcommand{\tV}{\widetilde{V}}
\newcommand{\tX}{\widetilde{X}}
\newcommand{\vGa}{\vec{\Gamma}}
\newcommand{\vd}{\vec{d}}
\newcommand{\vf}{\vec{f}}
\newcommand{\vs}{\vec{s}}
\newcommand{\Mbar}{\overline{\cM}}
\newtheorem{dummy}{dummy}[section]
\newtheorem{lemma}[dummy]{Lemma}
\newtheorem{theorem}[dummy]{Theorem}
\newtheorem{proposition}[dummy]{Proposition}
\newtheorem{definition}[dummy]{Definition}
\begin{document}
	\begin{CJK}{UTF8}{gbsn}
	\title{Open/closed correspondence for the projective line}

	\author{Zhengyu Zong}
	\address{Zhengyu Zong, Department of Mathematical Sciences,
		Tsinghua University, Haidian District, Beijing 100084, China}
	\email{zyzong@mail.tsinghua.edu.cn}

	\begin{abstract}
		We establish a correspondence between the disk invariants of the complex projective line $\bP^1$ with boundary condition specified by an $S^1$-invariant Lagrangian sub-manifold $L$ and the genus-zero closed Gromov-Witten invariants of a toric surface $X$.
	\end{abstract}
	\maketitle
\setcounter{tocdepth}{1}
\tableofcontents

\section{Introduction}

\subsection{Historical background and motivation}

\subsubsection{Open/closed correspondence for Calabi-Yau 3-folds}
The \emph{open/closed correspondence}, proposed by Mayr \cite{Mayr01} as a class of open/closed string dualities and developed by Lerche-Mayr \cite{LM01}, is a conjectural relation between the topological
amplitudes at genus zero of an open string geometry on a Calabi-Yau 3-fold $X$ relative to a Lagrangian $L$ and a closed string geometry on a corresponding Calabi-Yau 4-fold ̃$\tX$. In terms of Gromov-Witten theory, the correspondence conjecturally relates the disk Gromov-Witten invariants of the Calabi-Yau 3-fold $(X,L)$ to the genus-zero closed Gromov-Witten invariants of the dual Calabi-Yau 4-fold $\tX$.

In \cite{LY21}, the open/closed correspondence is mathematically proved for the case of a toric Calabi-Yau 3-fold $X$ and a Lagrangian submanifold $L$ of Aganagic-Vafa type. In \cite{LY22}, the above result is generalized to the case of a toric Calabi-Yau 3-orbifold $\cX$ and a Lagrangian suborbifold $\cL$ of Aganagic-Vafa type. The open/closed correspondence is also proved for the quintic threefold in \cite{AL23} in terms of Gauged Linear Sigma Model. By the open/relative correspondence for toric Calabi-Yau 3-orbifolds in \cite{FLT12}, the open/closed correspondence for toric Calabi-Yau 3-orbifolds can also be viewed as the log-local correspondence \cite{vGGR19}. See e.g. \cite{BBvG20,BBvG20b} for related works.

\subsubsection{Open/closed correspondence for the projective line}
In this paper, we show that the open/closed correspondence also works for the complex projective line $\bP^1$, although $\bP^1$ is not Calabi-Yau. 

Let $ [z_1, z_2]$ be the homogeneous coordinates of $\bP^1$. Let $S^1$ act on $\bP^1$ by $ t\cdot [z_1, z_2]=[ z_1, t z_2]$. Let $L:=\{[1,z]\in\bP^1\mid |z|=1\}$, which is a Lagrangian sub-manifold of $\bP^1$. The above $S^1$ action preserves $L$. By taking a M\"{o}bius transform, we can identify the pair $(\bP^1,L)$ with $(\bP^1,\bR\bP^1)$. We will study the $S^1$-equivariant open Gromov-Witten theory of $(\bP^1,L)$ in Section \ref{sec:GWX}. The open Gromov-Witten theory with descendants of $(\bP^1,\bR\bP^1)$ is studied in \cite{BNPT22}. Related works can be found in \cite{BT17,Net17,PST14,Tes23}.

On the other hand, we will define a toric surface $X$ in Section \ref{sec:geometryX} and study the equivariant closed Gromov-Witten theory of $X$ in Section \ref{sec:GWX}. The main result (Theorem \ref{thm:correspondence}) relates the $S^1$-equivariant open Gromov-Witten invariants of $(\bP^1,L)$ to the equivariant closed Gromov-Witten invariants of $X$.

We emphasize the following two features of our main result:
\begin{enumerate}
	\item We include \emph{descendant} insertions in both open Gromov-Witten invariants of $(\bP^1,L)$ and closed Gromov-Witten invariants of $X$.\\
	\item Since $\bP^1$ and $L$ are compact, we can take the non-equivariant limit of the $S^1$-equivariant open Gromov-Witten invariants of $(\bP^1,L)$. This limit equals to the non-equivariant open Gromov-Witten invariants of $(\bP^1,L)$ studied in \cite{BNPT22} via symplectic geometry. This feature is different from the case of toric Calabi-Yau 3-folds, which are always non-compact.
\end{enumerate}

We hope the result in this paper can contribute to understanding of the open/closed correspondence for more general target spaces, especially for those beyond Calabi-Yau level.

\subsection{Statement of the main result}
Let $S^1$ act on $\bP^1$ by
$$
t\cdot [z_1, z_2]=
[ z_1, t z_2].
$$
Let $\bC[u]=H_{S^1}^*(\mathrm{point};\bC)$ be
the $S^1$-equivariant cohomology of a point. Let $p_1=[1,0]$ and $p_2=[0,1]$ be the $S^1$ fixed points.

The $S^1$-equivariant cohomology of $\bP^1$ is given by
$$
H^*_{S^1}(\bP^1;\bC)= \bC[H,u]/\langle (H-u)H\rangle
$$
where $\deg H=\deg u=2$. Let
$$
\phi_1 := H,\quad
\phi_2 := H-u
$$

Let
$$
L:=\{[1,z]\in\bP^1\mid |z|=1\},
$$ 
which is a Lagrangian sub-manifold of $\bP^1$ and is preserved by the above $S^1$ action. We identify the relative homology group $H_2(\bP^1,L;\bZ)=\bZ^2$ so that the two oriented hemispheres containing $p_1$ and $p_2$ represent the generators $(1,0)$ and $(0,1)$ respectively. Under this identification, let $E(\bP^1,L)=\bZ_{\geq 0}^2$.

Given nonnegative integer $n$ and an element
$\beta=(d_{+},d_-)\in E(\bP^1,L),d_+\neq d_-$, we will consider degree-$\beta$, $S^1$-equivariant disk \emph{descendant Gromov-Witten invariant} (see Section \ref{sec:GWP1} for more details)  
$$
\left\langle  \tau_{a_1}(\phi_{\alpha_1}),\cdots,\tau_{a_n}(\phi_{\alpha_n})\right\rangle ^{(\bP^1,L),S^1}_{\beta},\quad \alpha_1,\cdots,\alpha_n\in\{1,2\}, a_1,\cdots,a_n\in\bZ_{\geq 0}.
$$
Here we assume $d_+\neq d_-$ since when $d_+=d_-$ the disk invariants vanish, see \cite[Lemma 4.4]{BNPT22}.

Let $N=\bZ^2$ and define $v_1,v_2,v_3,v_4\in N$ as
$$
v_1=(0,1),\quad v_2=(1,0),\quad v_3=(-1,1),\quad v_4=(1,-1).
$$
Let $\tau_i=\bR_{\geq 0}v_i\subset N_{\bR}:=N\otimes\bR,i=1,2,3,4$ be the corresponding 1-dimensional cones. Define 2-dimensional cones $\sigma_0, \sigma_1,\sigma_2\subset N_{\bR}$ as
$$
\sigma_0=\bR_{\geq 0}v_1+\bR_{\geq 0}v_2,\quad \sigma_1=\bR_{\geq 0}v_1+\bR_{\geq 0}v_3,\quad \sigma_2=\bR_{\geq 0}v_2+\bR_{\geq 0}v_4.
$$
Let $\Sigma$ be the fan with top dimensional cones $\sigma_0, \sigma_1,\sigma_2$ and let $X$ be the toric surface defined by $\Sigma$. We refer to \cite{CLS11,Fulton93} for the general theory of
toric varieties.

The torus $T:=N\otimes\bC^*\cong (\bC^*)^2$ acts on $X$. Let $p_{\sigma_i}=V(\sigma_i), i=0,1,2$ be the $T-$fixed points and let $l_{\tau_i}=V(\tau_i), i=1,2,3,4$ be the $T-$invariant lines. Define $\tphi_1,\tphi_2$ in the $T-$equivariant cohomology of $X$ as
 $$\tphi_1:=\frac{[p_{\sigma_1}]}{-u_1-u_2},\tphi_2:=\frac{[p_{\sigma_2}]}{-u_1-u_2},
 $$
where $\bC[u_1,u_2]=H_{T}^*(\mathrm{point};\bC)$ is
the $T$-equivariant cohomology of a point. 

We have $H_2(X;\bZ)=\bZ l_{\tau_1}\oplus \bZ l_{\tau_2}$. So we identify $H_2(X;\bZ)$ to $\bZ^2$, where $(d_1,d_2)\in\bZ^2$ is identified to $d_1 l_{\tau_1}+d_2 l_{\tau_2}$. Let $\NE(X) \subset H_2(X;\bR)=H_2(X;\bR)$ be the Mori cone generated by effective curve classes in $X$, and $E(X)$ denote the semigroup $\NE(X)\cap H_2(X;\bZ)$. We identify the semigroup $E(X)$ to $\bZ_{\geq 0}^2$.

Given nonnegative integer $n$ and an element
$\beta\in E(X)$, we will consider genus-$0$, degree-$\beta$, $T$-equivariant \emph{descendant Gromov-Witten invariant} (see Section \ref{sec:GWX} for more details)
$$\left\langle\tau_{a_1}(\tphi_{\alpha_1}),\cdots,\tau_{a_n}(\tphi_{\alpha_n})\right\rangle ^{X,T}_{0,\beta},\quad \alpha_1,\cdots,\alpha_n\in\{1,2\}, a_1,\cdots,a_n\in\bZ_{\geq 0}.
$$

The following theorem is the main result of this paper:
\begin{theorem}[=Theorem \ref{thm:correspondence}]\label{thm:main}
	Let $\beta=(d_1,d_2)\in \bZ_{\geq 0}^2,d_1\neq d_2$. Then for $\alpha_1,\cdots,\alpha_n\in\{1,2\},a_1,\cdots,a_n\in\bZ_{\geq 0}$, we have
	\begin{eqnarray*}
		&&\left\langle  \tau_{a_1}(\tphi_{\alpha_1}),\cdots,\tau_{a_n}(\tphi_{\alpha_n})\right\rangle ^{X,T}_{0,\beta}\vert_{u_1+u_2=0,u_1=u}\\
		&=&\sum \frac{(-u^2)^{l+m-1}}{|\Aut(\mu^1,d^1,A^1)||\Aut(\mu^2,d^2,A^2)|}\prod_{i=1}^{l}\big(\frac{\mu^1_i}{-u}\big)^{1+b_i}\prod_{j=1}^{m}\big(\frac{\mu^2_j}{u}\big)^{1+c_j}\frac{(l+m-3)!}{\prod_{i=1}^{l}b_i!\prod_{j=1}^{m}c_j!}\\
		&&\cdot\prod_{i=1}^{l}\big(\frac{(-1)^{\mu^1_i}}{u}\left\langle  \prod_{k\in A^1_i}\tau_{a_k}(\phi_{\alpha_k})\right\rangle ^{(\bP^1,L),S^1}_{(0,1),\beta^1_i}\big)
		\prod_{j=1}^{m}\big(\frac{(-1)^{\mu^2_j+1}}{u}\left\langle  \prod_{k\in A^2_j}\tau_{a_k}(\phi_{\alpha_k})\right\rangle ^{(\bP^1,L),S^1}_{(0,1),\beta^2_j}\big).
	\end{eqnarray*}
	Here the sum is taken over $\{((\mu^1_i,d^1_i,A^1_i,b_i)_{1\leq i\leq l},(\mu^2_j,d^2_j,A^2_j,c_j)_{1\leq j\leq m},)\mid l,m\geq 0, l+m\geq 1,\mu^1_1\neq \mu^2_1 \textrm{ when } l=m=1, \mu^1_i,\mu^2_j>0, d^1_i,d^2_j,b_i,c_j\geq 0, \sum_{i=1}^{l}(d^1_i+\mu^1_i)+\sum_{j=1}^{m}d^2_j=d_1, \sum_{i=1}^{l}d^1_i+\sum_{j=1}^{m}(d^2_j+\mu^2_j)=d_2,\sqcup_{i=1}^{l}A^1_i\sqcup \sqcup_{j=1}^{m}A^2_j=\{1,\cdots,n\},\sum_{i=1}^{l}b_i+\sum_{j=1}^{m}c_j=l+m-3\}$ and $ \beta^1_i=(d^1_i+\mu^1_i,d^1_i), \beta^2_j=(d^2_j,d^2_j+\mu^2_j)$. The automorphism groups $\Aut(\mu^1,d^1,A^1)$ and $\Aut(\mu^2,d^2,A^2)$ are for the tuples $\big((\mu^1,d^1,A^1)_{1\leq i\leq l} \big)$ and $\big((\mu^2,d^2,A^2)_{1\leq j\leq m} \big)$ respectively.
	
\end{theorem}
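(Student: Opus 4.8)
\emph{Proof sketch.} The plan is to compute both sides by (virtual) torus localization and match the two resulting graph sums. For the closed side, apply the virtual localization theorem of Graber--Pandharipande to $\left\langle\tau_{a_1}(\tphi_{\alpha_1}),\ldots,\tau_{a_n}(\tphi_{\alpha_n})\right\rangle^{X,T}_{0,\beta}$. Although $X$ is not complete, the insertion classes are combinations of the compactly supported classes $[p_{\sigma_1}],[p_{\sigma_2}]$, so only $T$-fixed stable maps whose marked points all land on the proper fixed points $p_{\sigma_1},p_{\sigma_2}$ contribute, and the invariant is a well-defined element of $\bC(u_1,u_2)$. The $T$-fixed loci of $\Mbar_{0,n}(X,\beta)$ are indexed by decorated trees whose vertices map to $p_{\sigma_0},p_{\sigma_1},p_{\sigma_2}$ and whose edges are fully ramified multiple covers of the compact invariant lines $l_{\tau_1}$ (joining $p_{\sigma_0}$ and $p_{\sigma_1}$) and $l_{\tau_2}$ (joining $p_{\sigma_0}$ and $p_{\sigma_2}$); the non-compact lines $l_{\tau_3},l_{\tau_4}$ carry no stable maps. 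I would first read off the tangent weights at the three fixed points from the fan: at $p_{\sigma_1}$ they are $-u_1$ along $l_{\tau_1}$ and $u_1+u_2$ along $l_{\tau_3}$, and symmetrically at $p_{\sigma_2}$ they are $-u_2$ along $l_{\tau_2}$ and $u_1+u_2$ along $l_{\tau_4}$. Since the denominator $-u_1-u_2$ in $\tphi_1,\tphi_2$ is exactly this transverse weight, each graph contribution is already regular at $u_1+u_2=0$, which is what makes the substitution $u_1+u_2=0,\ u_1=u$ legitimate and makes it commute with the (finite) graph sum.

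Next I would re-sum the decorated trees into a ``core plus arms'' form. Because all marked points lie over $p_{\sigma_1}$ or $p_{\sigma_2}$ and every edge emanates from $p_{\sigma_0}$ (possibly through a $p_{\sigma_1}$ or $p_{\sigma_2}$ vertex), each fixed locus is built from a single \emph{core} supported over $p_{\sigma_0}$ carrying the moduli space $\Mbar_{0,l+m}$ (one node, hence one $\psi$-class, per arm), together with $l$ \emph{type-$1$ arms} along $l_{\tau_1}$ and $m$ \emph{type-$2$ arms} along $l_{\tau_2}$, where an arm is a connected $T$-fixed sub-stable-map beginning with an edge out of $p_{\sigma_0}$ and containing all the structure near $p_{\sigma_1}$ (resp.\ $p_{\sigma_2}$) together with its marked points. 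The core integral of the monomial $\prod_i\psi^{b_i}\prod_j\psi^{c_j}$ with $\sum_i b_i+\sum_j c_j=l+m-3$ produces the multinomial factor $\frac{(l+m-3)!}{\prod_i b_i!\prod_j c_j!}$, and the product of the Euler classes of the core's normal directions, of the node-smoothing classes, and of the edge normal bundles assembles into the powers of $-u^2$, of $\mu^1_i/(-u)$, of $\mu^2_j/u$, and of $1/u$ in the statement. The cases $l+m\le 2$, where $\Mbar_{0,l+m}$ is unstable, are treated by the usual conventions (the arms glue directly along a node), and this is where the restriction ``$\mu^1_1\ne\mu^2_1$ when $l=m=1$'' enters, encoding the hypothesis $d_1\ne d_2$ on the degenerate configuration.

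Then I would identify each arm contribution with a disk invariant of $(\bP^1,L)$. Recall from Section~\ref{sec:GWP1} the $S^1$-localization description of $\left\langle\prod_{k\in A}\tau_{a_k}(\phi_{\alpha_k})\right\rangle^{(\bP^1,L),S^1}_{(0,1),\beta'}$: an $S^1$-fixed stable disk is a basic winding-$\mu$ disk centered at $p_1$ or $p_2$, decorated by a tree of $\bP^1$'s that are fully ramified covers of $\bP^1$ together with components contracted to $p_1$ or $p_2$ carrying the interior marked points. This graph sum matches, edge for edge and vertex for vertex, the graph sum for a type-$1$ (resp.\ type-$2$) arm over $p_{\sigma_1}$ (resp.\ $p_{\sigma_2}$): the basic disk of winding $\mu^1_i$ corresponds to the degree-$\mu^1_i$ cover of $l_{\tau_1}$, the sphere bubbles over $p_1,p_2$ to the rest of the arm, and the points of $A^1_i$ to the same places; this accounts, via $\beta^1_i=(d^1_i+\mu^1_i,d^1_i)$, for how the winding-$\mu^1_i$ disk of relative degree $(d_\pm)$ distributes over $l_{\tau_1}$ and $l_{\tau_2}$, whence the class constraints $\sum_i(d^1_i+\mu^1_i)+\sum_j d^2_j=d_1$ and $\sum_i d^1_i+\sum_j(d^2_j+\mu^2_j)=d_2$. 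Comparing normal-bundle Euler factors, the only discrepancies are the disk-node gluing factor and the sign $(-1)^{\mu^1_i}$ (resp.\ $(-1)^{\mu^2_j+1}$) coming from the difference between the disk's distinguished tangent weight and the corresponding edge weight in $X$; these give precisely the prefactors $\frac{(-1)^{\mu^1_i}}{u}$ and $\frac{(-1)^{\mu^2_j+1}}{u}$ multiplying the open invariants, while the $\frac1{|\Aut(\mu^1,d^1,A^1)|}$ and $\frac1{|\Aut(\mu^2,d^2,A^2)|}$ account for permutations of identical arms. Multiplying the core factor by the product of arm factors and summing over all decorated trees then reproduces the right-hand side, the vanishing of disk invariants with $d_+=d_-$ (\cite[Lemma 4.4]{BNPT22}) being consistent with the exclusion of the $l=m=1,\ \mu^1_1=\mu^2_1$ term under $d_1\ne d_2$.

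The crux is this last identification together with the Euler-class bookkeeping: showing that the $X$-side arm sum reorganizes \emph{identically} --- including every normal-bundle weight, node-smoothing factor and sign --- into the $S^1$-localization expansion of the open invariants of $(\bP^1,L)$. This is a delicate but structured computation in the spirit of the toric Calabi--Yau case. An alternative would be to compare the two sides through explicit equivariant mirror ($I$-function) formulas for the genus-zero descendants of $X$ and for the disk potential of $(\bP^1,L)$, but the direct graph comparison is more transparent and keeps the geometric origin of each factor visible.
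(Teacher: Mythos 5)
Your overall strategy (localize both sides, reorganize the closed graph sum into a ``core at $p_{\sigma_0}$ plus arms'' shape, and identify each arm with a disk invariant of $(\bP^1,L)$) is the same as the paper's, but there is a genuine gap at the decisive step. You assert that \emph{each} $T$-fixed locus of $\Mbar_{0,n}(X,\beta)$ is built from a single core over $p_{\sigma_0}$ with $l+m$ arms. This is false before the substitution $u_1+u_2=0$: a decorated tree for $X$ may contain arbitrarily many vertices over $p_{\sigma_0}$ and vertices over $p_{\sigma_1},p_{\sigma_2}$ of any valence (for instance, for $\beta=(2,0)$, two degree-one edges over $l_{\tau_1}$ glued at a $\sigma_1$-vertex have two distinct $\sigma_0$-endpoints), and such graphs contribute nontrivially to the localization formula of Proposition \ref{prop:locX}. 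They drop out only \emph{after} setting $u_1+u_2=0$, and proving this is exactly the content of the paper's Lemma \ref{lem:vanish}: each $\sigma_1$- or $\sigma_2$-vertex carries $\sw(\sigma_v)^{\val(v)-1}$, i.e.\ a zero of order $\val(v)-1$ in $u_1+u_2$; each balanced valence-two $\sigma_0$-vertex (the set $V^{1,1}_0(\vGa)$) carries a simple pole from the node-smoothing integral $\int_{\Mbar_{0,2}}$; and the tree identity $|E(\vGa)|-|V(\vGa)|=-1$ turns this bookkeeping into a vanishing of order at least $k-1$ for $\vGa\in G_n(X,\beta)^k$, so only $k\le 1$ survives, while $G_n(X,\beta)^0=\emptyset$ because $d_1\neq d_2$. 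Your proposal contains no such argument; moreover your stated reason for regularity at $u_1+u_2=0$ (the denominators $-u_1-u_2$ in $\tphi_1,\tphi_2$ cancelling the transverse weight) is not the actual mechanism --- those denominators are cancelled by the restriction of $[p_{\sigma_\alpha}]$ at the marked vertices, whereas the real poles come from the $V^{1,1}_0$ node-smoothing factors and are cancelled only through the global counting above. Without this vanishing lemma the core-plus-arms re-summation, and hence the derivation of the right-hand side, does not go through.

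Once the reduction to $G_n(X,\beta)^{1}$ is supplied, the remainder of your plan matches the paper: split the surviving graph at the unique vertex $v_*\in V_0(\vGa)\setminus V^{1,1}_0(\vGa)$, evaluate the core integral $\int_{\Mbar_{0,l+m}}$ to get the multinomial factor, and identify each arm with an open invariant via the bijection of Lemma \ref{lem:G11}, which merges the two equal-degree edges at every $V^{1,1}_0$-vertex into a single edge of an open graph (so an arm is not just ``the structure near $p_{\sigma_1}$'': it contains those balanced $\sigma_0$-vertices, which become interior points of edges on the open side), with the prefactors $(-1)^{\mu}\mu$ and $1/u$ and the automorphism factors accounted for exactly as you indicate. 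Two smaller points: your tangent weights at $p_{\sigma_1},p_{\sigma_2}$ have signs opposite to the paper's conventions ($\sw(\tau_1,\sigma_1)=u_1$, $\sw(\tau_3,\sigma_1)=-u_1-u_2$), which matters for the signs $(-1)^{\mu^1_i}$ and $(-1)^{\mu^2_j+1}$; and the exclusion $\mu^1_1\neq\mu^2_1$ for $l=m=1$ is forced because such a configuration would put $v_*$ in $V^{1,1}_0(\vGa)$ (equivalently would force $d_1=d_2$), not merely by consistency with the vanishing of $d_+=d_-$ disk invariants.
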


\subsection{Overview of the paper}
In Section \ref{sec:geometry}, we review the open geometry of $(\bP^1,L)$ and the closed geometry of the toric surface $X$. We also study the equivariant cohomology of $\bP^1$ and $X$. In Section \ref{sec:GWP1}, we study the $S^1$-equivariant open Gromov-Witten theory of $(\bP^1,L)$ and give the graph sum formula via virtual localization. In Section \ref{sec:GWX}, we study the equivariant closed Gromov-Witten theory of $X$ and study the corresponding graph sum formula. In Section \ref{sec:correspondence}, we prove the open closed correspondence, which is the main theorem of this paper.

\subsection*{Acknowledgements}
The author would like to thank Song Yu for useful suggestions on the proof of the open/closed correspondence. The author would also like to thank Bohan Fang and Chiu-Chu Melissa Liu for useful discussions. The work of the author is partially supported by the Natural Science Foundation of Beijing, China grant No. 1252008 and NSFC grant No. 11701315.

\section{Geometric setup}\label{sec:geometry}

\subsection{Equivariant cohomology of $\bP^1$}\label{sec:geometryP1}

Let $S^1$ act on $\bP^1$ by
$$
t\cdot [z_1, z_2]=
[ z_1, t z_2].
$$
Let $\bC[u]=H_{S^1}^*(\mathrm{point};\bC)$ be
the $S^1$-equivariant cohomology of a point.

The $S^1$-equivariant cohomology of $\bP^1$ is given by
$$
H^*_{S^1}(\bP^1;\bC)= \bC[H,u]/\langle (H-u)H\rangle
$$
where $\deg H=\deg u=2$.
Let $p_1=[1,0]$ and $p_2=[0,1]$ be the $S^1$ fixed points.
Then $H|_{p_1}= u, H|_{p_2}= 0$. The $S^1$-equivariant Poincar\'{e} dual
of $p_1$ and $p_2$ are $H$ and $H-u$, respectively.
Let
$$
\phi_1 := H,\quad
\phi_2 := H-u
$$
Then $\deg \phi_\alpha=2$,
$$
\phi_1 \cup \phi_1 = u \phi_1,\quad \phi_2 \cup \phi_2 = -u \phi_2,\quad \phi_1 \cup \phi_2 = 0.
$$

Let
$$
L:=\{[1,z]\in\bP^1\mid |z|=1\},
$$ 
which is a Lagrangian sub-manifold of $\bP^1$. The above $S^1$ action preserves $L$. By taking a M\"{o}bius transform, we can identify the pair $(\bP^1,L)$ with $(\bP^1,\bR\bP^1)$.

We identify the relative homology group $H_2(\bP^1,L;\bZ)=\bZ^2$ so that the two oriented hemispheres containing $p_1$ and $p_2$ represent the generators $(1,0)$ and $(0,1)$ respectively. Under this identification, let $E(\bP^1,L)=\bZ_{\geq 0}^2$. Let $D$ be the disk and let $\partial D$ be its boundary. Given $\beta\in E(\bP^1,L)$, a \emph{degree $\beta$ disk map} to $(\bP^1,L)$ is a map $f:(D,\partial D)\to (\bP^1,L)$ with $f_*([D])=\beta\in E(\bP^1,L)\subset H_2(\bP^1,L;\bZ)$.

\subsection{Geometry and equivariant cohomology of $X$}\label{sec:geometryX}
In this subsection, we construct a toric surface $X$ and study its geometry and equivariant cohomology. We refer to \cite{CLS11,Fulton93} for the general theory of
toric varieties.

Let $N=\bZ^2$ and define $v_1,v_2,v_3,v_4\in N$ as
$$
v_1=(0,1),\quad v_2=(1,0),\quad v_3=(-1,1),\quad v_4=(1,-1).
$$
Let $\tau_i=\bR_{\geq 0}v_i\subset N_{\bR}:=N\otimes\bR,i=1,2,3,4$ be the corresponding 1-dimensional cones. Define 2-dimensional cones $\sigma_0, \sigma_1,\sigma_2\subset N_{\bR}$ as
$$
\sigma_0=\bR_{\geq 0}v_1+\bR_{\geq 0}v_2,\quad \sigma_1=\bR_{\geq 0}v_1+\bR_{\geq 0}v_3,\quad \sigma_2=\bR_{\geq 0}v_2+\bR_{\geq 0}v_4.
$$
Let $\Sigma$ be the fan with top dimensional cones $\sigma_0, \sigma_1,\sigma_2$ and let $X$ be the toric surface defined by $\Sigma$.

Let $T:=N\otimes\bC^*\cong (\bC^*)^2$ be the algebraic torus contained in $X$ as an open dense subset. The nature action of $T$ on itself extends to a $T$ action on $X$. Let $p_{\sigma_i}=V(\sigma_i), i=0,1,2$ and let $l_{\tau_i}=V(\tau_i), i=1,2,3,4$. Then $p_{\sigma_i}, i=0,1,2$ are the $T-$fixed points and $l_{\tau_i}, i=1,2,3,4$ are the $T-$invariant lines. Let $M:=\Hom(N,\bZ)$, which can be canonically identified as the character
lattice $\Hom(T,\bC^*)$ of $T$. For $\tau_i\subset\sigma_j$, let $\sw(\tau_i,\sigma_j)\in M$ be the weight of the $T$-action on $T_{p_{\sigma_j}}l_{\tau_i}$, the tangent line to $l_{\tau_i}$ at the fixed point $p_{\sigma_j}$. The weights $\sw(\tau_i,\sigma_j)$ are given by
$$
\sw(\tau_1,\sigma_0)=-u_1,\quad\sw(\tau_2,\sigma_0)=-u_2,\quad \sw(\tau_1,\sigma_1)=u_1,
$$
$$
\sw(\tau_3,\sigma_1)=-u_1-u_2,\quad \sw(\tau_2,\sigma_2)=u_2,\quad \sw(\tau_4,\sigma_0)=-u_1-u_2.
$$

Define $\tphi_1,\tphi_2,\tphi_0\in H^*_T(X;\bC)\otimes_{\bC[u_1,u_2]}\bC(u_1,u_2)$ as $$\tphi_1:=\frac{[p_{\sigma_1}]}{-u_1-u_2},\tphi_2:=\frac{[p_{\sigma_2}]}{-u_1-u_2},\tphi_0:=\frac{[p_{\sigma_0}]}{u_1u_2}.$$
Then $\tphi_1,\tphi_2,\tphi_0$ is a basis of $H^*_T(X;\bC)\otimes_{\bC[u_1,u_2]}\bC(u_1,u_2)$, $\deg\tphi_1=\deg\tphi_2=2,\deg\tphi_0=0$, and 
\begin{equation}\label{eqn:tphi}
\tphi_1 \cup \tphi_1 = u_1 \tphi_1,\quad \tphi_2 \cup \tphi_2 = u_2 \tphi_2,\quad \tphi_0 \cup \tphi_0 = \tphi_0,\quad \tphi_i \cup \tphi_j = 0,i\neq i.
\end{equation}

We have $H_2(X;\bZ)=\bZ l_{\tau_1}\oplus \bZ l_{\tau_2}$. So we identify $H_2(X;\bZ)$ to $\bZ^2$, where $(d_1,d_2)\in\bZ^2$ is identified to $d_1 l_{\tau_1}+d_2 l_{\tau_2}$. Let $\NE(X) \subset H_2(X;\bR)=H_2(X;\bR)$ be the Mori cone generated by effective curve classes in $X$, and $E(X)$ denote the semigroup $\NE(X)\cap H_2(X;\bZ)$. We identify the semigroup $E(X)$ to $\bZ_{\geq 0}^2$.

\section{Open Gromov-Witten theory of $\bP^1$}\label{sec:GWP1}
In this section, we review the $S^1$-equivariant open Gromov-Witten theory of $\bP^1$, which is studied in \cite{BNPT22}. We refer to \cite{Liu02} for open Gromov-Witten theory of more general target spaces with an $S^1$ action.
\subsection{Open Gromov-Witten invariants}
Given nonnegative integer $n$ and an element
$\beta=(d_{+},d_-)\in E(\bP^1,L),d_+\neq d_-$, let $\Mbar_{(0,1),n}(\bP^1,L, \beta)$ be the moduli space of degree-$\beta$ stable disk maps to $(\bP^1,L)$ with $n$ interior marked points. Let $\ev_i:\Mbar_{(0,1),n}(\bP^1,L, \beta)\to \bP^1$ be the evaluation map
at the $i$-th marked point. The $S^1$-action on $(\bP^1,L)$ induces
$S^1$-actions on the moduli space $\Mbar_{(0,1),n}(\bP^1,L, \beta)$, and
the evaluation map $\ev_i$ is $S^1$-equivariant.

For $i=1,\dots,n$, let $\bL_i$ be the $i$-th tautological line bundle over $\Mbar_{(0,1),n}(\bP^1,L, \beta)$ formed
by the cotangent line at the $i$-th marked point. Define the $i$-th descendant class $\psi_i$ as
$$
\psi_i := c_1(\bL_i)\in H^2(\Mbar_{(0,1),n}(\bP^1,L, \beta);\bQ).
$$
We choose an $S^1$-equivariant
lift $\psi_i^{T}\in H^2_{S^1}(\Mbar_{(0,1),n}(\bP^1,L, \beta);\bQ)$
of $\psi_i$. 

Let $\gamma_1,\dots, \gamma_n\in H_{S^1}^*(\bP^1,\bC)$ and $a_1,\dots,a_n\in \bZ_{\ge 0}$. We define the degree-$\beta$, $S^1$-equivariant disk \emph{descendant Gromov-Witten invariant}
\begin{align*}
	&\langle \tau_{a_1}(\gamma_1), \dots, \tau_{a_n}(\gamma_n)\rangle^{(\bP^1,L),S^1}_{(0,1),\beta} \\
	& := \int_{[\Mbar_{(0,1),n}(\bP^1,L, \beta)^{S^1}]} \frac{\iota^*\big(\prod_{i=1}^n \ev_i^*(\gamma_i)(\psi_i^{S^1})^{a_i}\big)}{e_{S^1}(N^\vir)}
	\quad \in \bC(u)
\end{align*}
where $\Mbar_{(0,1),n}(\bP^1,L, \beta)^{S^1}$ is the $S^1$-fixed locus, $e_{S^1}(N^\vir)$ is the $S^1$-equivariant Euler class of the virtual normal bundle of $\Mbar_{(0,1),n}(\bP^1,L, \beta)^{S^1}$ in $\Mbar_{(0,1),n}(\bP^1,L, \beta)$, and $\iota: \Mbar_{(0,1),n}(\bP^1,L, \beta)^{S^1}\hookrightarrow \Mbar_{(0,1),n}(\bP^1,L, \beta)$ is the inclusion map. Since $\Mbar_{(0,1),n}(\bP^1,L, \beta)^{S^1}$ is a compact orbifold without boundary, the above integral is well-defined.

By taking a M\"{o}bius transform, we can identify the pair $(\bP^1,L)$ with $(\bP^1,\bR\bP^1)$. By the main theorem of \cite{BNPT22}, the non-equivariant limit of 
$$
\left\langle  \tau_{a_1}(\phi_{\alpha_1}),\cdots,\tau_{a_n}(\phi_{\alpha_n})\right\rangle ^{(\bP^1,L),S^1}_{(0,1),\beta},\quad \alpha_i \in \{1,2\}
$$
is equal to the non-equivariant stationary open Gromov-Witten invariant of $(\bP^1,L)$ constructed in \cite{BNPT22}.

\subsection{Virtual localization formula}

\begin{definition}
	Let $n\in\bZ_{\geq 0}$ and $\beta=(d_{+},d_-)\in E(\bP^1,L),d_+\neq d_-$. A genus-zero, $n$-pointed, degree-$\beta$ decorated graph for $(\bP^1,L)$ is a tuple $\vGa=(\Gamma,\vf,\vd,\vs)$ where:
	\begin{itemize}
		\item $\Gamma$ is a tree. Let $\tV(\Gamma)$ denote the set of
		vertices in $\Gamma$, $E(\Gamma)$ denote the set of edges in $\Gamma$, and $F(\Gamma):=\{(e,v)\in E(\Gamma)\times \tV(\Gamma)\mid e\textrm{ is attached to } v \}$ be the set of flags. For each $v\in \tV(\Gamma)$, define $E_v:=\{e\in E(\Gamma)\mid (e,v)\in F(\Gamma)\}$ and let $\val(v):=|E_v|$ denote the \emph{valence} of $v$.
		\item We fix a univalent vertex $v_0\in \tV(\Gamma)$ called the \emph{root} of $\Gamma$ and let $V(\Gamma):=\tV(\Gamma)\setminus \{v_0\}$. Let $e_0\in E(\Gamma)$ be the unique edge attaching to $v_0$ and let $v_1\neq v_0$ be the other vertex where $e_0$ is attached.
		\item $\vf:V(\Gamma)\cup E(\Gamma)\to \{\sigma_+,\sigma_-\}\cup\{\tau\} $ is the \emph{label map} that sends each vertex $v\in V(\Gamma)$ to some $\sigma_v\in \{\sigma_+,\sigma_-\}$ and each edge $e\in E(\Gamma)$ to $\tau_e= \tau$. Let $\Upsilon_{\bP^1}$ be the graph with two vertices $\sigma_+,\sigma_-$ joined by an edge $\tau$. We require that $\vf$ defines a map from
		the graph $\Gamma$ to the graph $\Upsilon_{\bP^1}$. We also require that $\vf(v_1)=\sigma_+$ if $d_+>d_-$; and $\vf(v_1)=\sigma_-$ if $d_+<d_-$.
		\item $\vd: E(\Gamma)\to\bZ_{>0}$ is the \emph{degree map}. We denote $d_e:=\vd(e)$ for each $e\in E(\Gamma)$. We require that $\sum_{e\in E(\Gamma)}d_e=d_+,\sum_{e\in E(\Gamma)\setminus\{e_0\}}d_e=d_-$ if $d_+>d_-$; and $\sum_{e\in E(\Gamma)}d_e=d_-,\sum_{e\in E(\Gamma)\setminus\{e_0\}}d_e=d_+$ if $d_+<d_-$.
		\item $\vs:\{1,\cdots,n\}\to V(\Gamma)$ is the \emph{marking map}.
		
	\end{itemize}
	
\end{definition}
Let $ G_{n}(\bP^1,L,\beta)$ denote the set of all genus-zero, $n$-pointed, degree-$\beta$ decorated graphs for $(\bP^1,L)$. Given $\vGa\in G_{n}(\bP^1,L,\beta)$, we introduce the following notations:
\begin{itemize}
	\item For each $v\in V(\Gamma)$, define $S_v:=\vs^{-1}(v)$. 
	\item Let $\Aut(\vGa)$ denote the group of automorphisms of $\vGa$, i.e. automorphisms of the graph $\Gamma$ that make the label maps $\vf,\vd,\vs,v_0$ invariant.
\end{itemize}

For $d\in\bZ_{>0}$, we define
\begin{equation}
	\sh(\tau,d)=\frac{(-1)^d d^{2d}}{(d!)^2u_i^{2d}}.
\end{equation}

We define
\begin{equation}
	\sw(\sigma_{\pm})=\pm u
\end{equation}
 and 
\begin{equation}
	D_+(\mu)=\frac{\mu^{\mu}}{\mu ! u^{\mu}},\quad 	D_-(\mu)=(-1)^{\mu}\frac{\mu^{\mu}}{\mu ! u^{\mu}},\quad \mu \in\bZ_{>0}.
\end{equation}

The following proposition is the main theorem of \cite{BNPT22}. Similarly localization computation for open Gromov-Witten invariants can be found, for example, in \cite{KL01,FLT12}.
\begin{proposition}\label{prop:locP1}
	Let $\beta=(d_{+},d_-)\in E(\bP^1,L),d_+\neq d_-$ and $\mu=|d_+-d_-|$. When $d_+>d_-$, we have the following graph sum formula：
	\begin{eqnarray*}
		&&\left\langle  \tau_{a_1}(\phi_{\alpha_1}),\cdots,\tau_{a_n}(\phi_{\alpha_n})\right\rangle ^{(\bP^1,L),S^1}_{(0,1),\beta}\\
		&=&\sum_{\vGa\in G_{n}(\bP^1,L,\beta)}\frac{1}{|\Aut(\vGa)|}\prod_{e\in E(\Gamma)\setminus\{e_0\}}\frac{\sh(\tau,d_e)}{d_e}\prod_{v\in V(\Gamma)}\big(\sw(\sigma_v)^{\val(v)-1}\prod_{i\in S_v}i^*_{\sigma_v}\phi_{\alpha_i} \big)\\
		&&\cdot \prod_{v\in V(\Gamma)}\int_{\Mbar_{0,E_{v}\cup S_{v}}}\frac{\prod_{i\in S_v}\psi_i^{a_i}}{\prod_{e\in E_{v}}(w_{(e,v)}-\psi_{(e,v)})}D_+(\mu),
	\end{eqnarray*}
where $w_{(e,v)}=\pm \frac{u}{d_e}$ if $\vf(v)=\sigma_{\pm}$. When $d_+<d_-$, we have the same graph sum formula with $D_+(\mu)$ replaced by $D_-(\mu)$.
\end{proposition}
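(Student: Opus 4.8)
The plan is to prove Proposition \ref{prop:locP1} by applying the virtual localization theorem of Graber--Pandharipande (in the open/equivariant setting, following \cite{Liu02,KL01}) to the moduli space $\Mbar_{(0,1),n}(\bP^1,L,\beta)$ with its induced $S^1$-action. First I would analyze the $S^1$-fixed loci of $\Mbar_{(0,1),n}(\bP^1,L,\beta)$. A stable disk map fixed by $S^1$ must send its disk component to a neighborhood of one of the two fixed points $p_1,p_2$ with boundary on $L$ — this contributes the boundary ``half-edge'' data encoded by the root $v_0$, the edge $e_0$, and the disk factor — while the remaining sphere components are mapped either to fixed points (contracted, giving vertex contributions indexed by $V(\Gamma)$) or to the line $\overline{p_1p_2}=\bP^1$ as multiple covers totally ramified over $p_1,p_2$ (giving edge contributions indexed by $E(\Gamma)\setminus\{e_0\}$). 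This combinatorial bookkeeping is exactly the content of the decorated graphs $\vGa\in G_n(\bP^1,L,\beta)$: the label map $\vf$ records which fixed point each vertex/edge lies over, $\vd$ records cover degrees, and the condition $\vf(v_1)=\sigma_+$ when $d_+>d_-$ reflects that the disk caps off at the ``larger'' side. The degree constraints $\sum d_e=d_+$, $\sum_{e\neq e_0}d_e=d_-$ come from decomposing $\beta=(d_+,d_-)\in H_2(\bP^1,L)$ into the disk class $(1,1)$-type piece plus sphere bubble classes.

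Next I would identify each fixed locus with a product of Deligne--Mumford spaces $\prod_{v\in V(\Gamma)}\Mbar_{0,E_v\cup S_v}$ (quotiented by $\Aut(\vGa)$ and by the automorphisms $\prod_e \bZ/d_e$ of the multiple covers, which together produce the factor $1/|\Aut(\vGa)|$ and the $1/d_e$ in $\sh(\tau,d_e)/d_e$). Then the bulk of the work is computing the equivariant Euler class of the virtual normal bundle: this factorizes into (i) edge contributions — the standard Bott-residue computation for a degree-$d_e$ cover of $\bP^1$ fixed by $S^1$, which with weights $H|_{p_1}=u$, $H|_{p_2}=0$ yields precisely $\sh(\tau,d_e)=\frac{(-1)^{d_e}d_e^{2d_e}}{(d_e!)^2 u^{2d_e}}$ (here $u_i$ in the displayed formula for $\sh$ should read $u$); (ii) vertex/node smoothing contributions producing the factors $\frac{1}{w_{(e,v)}-\psi_{(e,v)}}$ with $w_{(e,v)}=\pm u/d_e$ according to the sign $\vf(v)=\sigma_\pm$, together with the tangent-weight factors $\sw(\sigma_v)^{\val(v)-1}=(\pm u)^{\val(v)-1}$ coming from the automorphisms of the genus-zero contracted components and the smoothing of nodes at the fixed point; and (iii) the disk/boundary contribution at the root, which is the disk factor $D_\pm(\mu)$ — this is the genuinely ``open'' input, computed as in \cite{KL01,FLT12} and $\cite{BNPT22}$, where $\mu=|d_+-d_-|=d_{e_0}$ is the degree of the disk-containing edge and the sign $D_+$ vs.\ $D_-$ tracks whether the disk lies over $p_1$ or $p_2$. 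Assembling these, together with the integrand restrictions $\ev_i^*\phi_{\alpha_i}|_{p_{\sigma_v}}=i^*_{\sigma_v}\phi_{\alpha_i}$ and $(\psi_i^{S^1})^{a_i}\mapsto\psi_i^{a_i}$ on each fixed component, gives the stated graph sum.

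The main obstacle I anticipate is pinning down the open/disk contribution $D_\pm(\mu)$ and the associated orientation/sign conventions: unlike the closed case, the ``fixed locus'' near the boundary is not cleanly a union of smooth Deligne--Mumford strata with a canonical obstruction theory, so one must either invoke the framework of \cite{Liu02} for open Gromov--Witten invariants with an $S^1$-action (where the disk moduli and its virtual normal bundle are defined via the doubling/Fukaya--Oh-type construction) or cite the explicit localization result of \cite{BNPT22} directly. In practice, since the statement is advertised as ``the main theorem of \cite{BNPT22},'' the cleanest route is to reduce to that reference: set up the fixed-point decomposition and the graph combinatorics as above, match our decorated graphs $G_n(\bP^1,L,\beta)$ with the index set in \cite{BNPT22}, and then verify that our normalization of $\sh$, $\sw$, $D_\pm$, and the descendant lifts $\psi_i^{S^1}$ agrees with theirs, so that the formula is a transcription. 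I would therefore present the proof as: (1) fixed-locus analysis and graph combinatorics; (2) factorization of $e_{S^1}(N^\vir)$ into edge, vertex, and disk pieces; (3) explicit evaluation of the edge and vertex pieces by Bott residue; (4) identification of the disk piece with $D_\pm(\mu)$ via \cite{BNPT22,KL01,FLT12}; and (5) reassembly, with the case $d_+<d_-$ obtained by the evident symmetry $p_1\leftrightarrow p_2$, which swaps $\sigma_+\leftrightarrow\sigma_-$ and $D_+\leftrightarrow D_-$.
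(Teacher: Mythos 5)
Your proposal is correct and takes essentially the same route as the paper: the paper gives no independent proof of this proposition, stating it as the main theorem of \cite{BNPT22} (with \cite{KL01,FLT12} cited for analogous localization computations), and your plan — fixed-locus/graph analysis, factorization of $e_{S^1}(N^{\vir})$ into edge, vertex, and disk pieces, and identification of the disk factor $D_{\pm}(\mu)$ and of all normalization conventions with those of \cite{BNPT22} — is precisely the standard argument underlying that citation. Your remark that $u_i$ in the displayed formula for $\sh(\tau,d)$ should read $u$ is also correct.
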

In Proposition \ref{prop:locP1}, we used the following convention for the unstable integrals:
\begin{equation}\label{eqn:unstable}
	\int_{\Mbar_{0,1}}\frac{1}{1-d\psi}=\frac{1}{d^2},\quad
	\int_{\Mbar_{0,2}}\frac{1}{(1-d_1\psi_1)(1
		-d_2\psi_2)}
	= \frac{1}{d_1+d_2},\quad
	\int_{\Mbar_{0,2}}\frac{1}{(1-d\psi_1)}=\frac{1}{d}.
\end{equation}

\section{Gromov-Witten theory of $X$}\label{sec:GWX}

\subsection{Equivariant Gromov-Witten invariants}
Given nonnegative integer $n$ and an effective curve class
$\beta\in E(X)$, let $\Mbar_{0,n}(X, \beta)$ be the moduli space of genus-$0$, $n$-pointed, degree-$\beta$ stable maps to $X$. Let $\ev_i:\Mbar_{0,n}(X,\beta)\to X$ be the evaluation map
at the $i$-th marked point. The $T$-action on $X$ induces
$T$-actions on the moduli space $\Mbar_{0,n}(X,\beta)$, and
the evaluation map $\ev_i$ is $T$-equivariant.

For $i=1,\dots,n$, let $\bL_i$ be the $i$-th tautological line bundle over $\Mbar_{0,n}(X, \beta)$ formed
by the cotangent line at the $i$-th marked point. Define the $i$-th descendant class $\psi_i$ as
$$
\psi_i := c_1(\bL_i)\in H^2(\Mbar_{0,n}(X, \beta);\bQ).
$$
We choose a $T$-equivariant
lift $\psi_i^{T}\in H^2_{T}(\Mbar_{0,n}(X, \beta);\bQ)$
of $\psi_i$. 

Let $\gamma_1,\dots, \gamma_n\in H_{T}^*(X,\bC)$ and $a_1,\dots,a_n\in \bZ_{\ge 0}$. We define the genus-$0$, degree-$\beta$, $T$-equivariant \emph{descendant Gromov-Witten invariant}
\begin{align*}
	&\langle \tau_{a_1}(\gamma_1), \dots, \tau_{a_n}(\gamma_n)\rangle^{X,T}_{0,\beta} \\
	& := \int_{[\Mbar_{0,n}(X, \beta)^{T}]^{\vir,T}} \frac{\iota^*\big(\prod_{i=1}^n \ev_i^*(\gamma_i)(\psi_i^{T})^{a_i}\big)}{e_{T}(N^\vir)}
	\quad \in \bC(u_1,u_2)
\end{align*}
where $\Mbar_{0,n}(X, \beta)^{T}$ is the $T$-fixed locus, $e_{T}(N^\vir)$ is the $T$-equivariant Euler class of the virtual normal bundle of $\Mbar_{0,n}(X,\beta)^{T}$ in $\Mbar_{0,n}(X,\beta)$, $[\Mbar_{0,n}(X, \beta)^{T}]^{\vir,T}$ is the virtual fundamental class, and $\iota: \Mbar_{0,n}(X, \beta)^{T}\hookrightarrow \Mbar_{0,n}(X, \beta)$ is the inclusion map.

\subsection{Graph sum formula}
\begin{definition}
	Let $n\in\bZ_{\geq 0}$ and $\beta\in E(X)$ be an effective curve class. A genus-zero, $n$-pointed, degree-$\beta$ decorated graph for $X$ is a tuple $\vGa=(\Gamma,\vf,\vd,\vs)$ where:
	\begin{itemize}
		\item $\Gamma$ is a tree. Let $V(\Gamma)$ denote the set of
		vertices in $\Gamma$, $E(\Gamma)$ denote the set of edges in $\Gamma$, and $F(\Gamma):=\{(e,v)\in E(\Gamma)\times V(\Gamma)\mid e\textrm{ is attached to } v \}$ be the set of flags.
		\item $\vf:V(\Gamma)\cup E(\Gamma)\to \{\sigma_0,\sigma_1,\sigma_2\}\cup\{\tau_1,\tau_2\} $ is the \emph{label map} that sends each vertex $v\in V(\Gamma)$ to some $\sigma_v\in \{\sigma_0,\sigma_1,\sigma_2\}$ and each edge $e\in E(\Gamma)$ to some $\tau_e\in \{\tau_1,\tau_2\}$. Let $\Upsilon_{\Sigma}$ be the graph with vertices $\sigma_0,\sigma_1,\sigma_2$ and with edges $\tau_1,\tau_2$ joining $\sigma_0,\sigma_1$ and $\sigma_0,\sigma_2$ respectively. We require that $\vf$ defines a map from
		the graph $\Gamma$ to the graph $\Upsilon_{\Sigma}$.
		\item $\vd: E(\Gamma)\to\bZ_{>0}$ is the \emph{degree map}. We denote $d_e:=\vd(e)$ for each $e\in E(\Gamma)$. We require that $\sum_{e\in E(\Gamma)}d_e[l_{\tau_e}]=\beta$.
		\item $\vs:\{1,\cdots,n\}\to V(\Gamma)$ is the \emph{marking map}.
		
	\end{itemize}
	
\end{definition}
Let $ G_{n}(X,\beta)$ denote the set of all genus-zero, $n$-pointed, degree-$\beta$ decorated graphs for $X$. Given $\vGa\in G_{n}(X,\beta)$, we introduce the following notations:
\begin{itemize}
	\item For each $v\in V(\Gamma)$, define $E_v:=\{e\in E(\Gamma)\mid (e,v)\in F(\Gamma)\}$ and $S_v:=\vs^{-1}(v)$. Let $\val(v):=|E_v|$.
	\item Let $\Aut(\vGa)$ denote the group of automorphisms of $\vGa$, i.e. automorphisms of the graph $\Gamma$ that make the label maps $\vf,\vd,\vs$ invariant.
\end{itemize}

Given $\vGa\in G_{n}(X,\beta)$, let
$$
V(\Gamma)=V_0(\vGa)\sqcup V_1(\vGa)\sqcup V_2(\vGa),
$$
where
\begin{equation}
	V_i(\vGa):=\{v\in V(\Gamma)\mid \vf(v)=\sigma_i\},\quad i=0,1,2.
\end{equation}
Let
$$
V^{1,1}_0(\vGa)\subset V_0(\vGa)
$$
be defined as
\begin{equation}
V^{1,1}_0(\vGa):=\{v\in V_0(\vGa)\mid \val(v)=2, \{\tau_{e_1},\tau_{e_2}\}=\{\tau_1,\tau_2\},d_{e_1}=d_{e_2}\},
\end{equation}
where $e_1,e_2$ are the two edges attached to $v$. 

Consider the decomposition
$$
 G_{n}(X,\beta)=\bigsqcup_{k\geq 0}G_{n}(X,\beta)^k
$$
where $G_{n}(X,\beta)^k$ is defined as
\begin{equation}
	G_{n}(X,\beta)^k:=\{\vGa\in G_{n}(X,\beta)\mid |V_0(\vGa)|-|V^{1,1}_0(\vGa)|=k\}.
\end{equation}
For $\vGa\in G_{n}(X,\beta)^1$, let $V_0(\vGa)=V^{1,1}_0(\vGa)\cup\{v_*\}$. Then we can further decompose $G_{n}(X,\beta)^1$ as
$$
G_{n}(X,\beta)^1=\bigsqcup_{l\geq 1}G_{n}(X,\beta)^{1,l}
$$
where $G_{n}(X,\beta)^{1,l}$ is defined as
\begin{equation}
	G_{n}(X,\beta)^{1,l}:=\{\vGa\in G_{n}(X,\beta)^1\mid \val(v_*)=l\}.
\end{equation}
Let $\beta=(d_1,d_2)$. Then we remark that 
\begin{equation}\label{eqn:empty}
	G_{n}(X,\beta)^0=\emptyset
\end{equation}
when $d_1\neq d_2$.

We define
\begin{equation}\label{eqn:edgeX}
	\sh(\tau_i,d)=\frac{(-1)^d d^{2d}}{(d!)^2u_i^{2d}}\prod_{j=1}^{d-1}\big(-u_1-u_2+j\frac{u_i}{d}\big)
\end{equation}
for $i=1,2$.
We define
\begin{equation}\label{eqn:vertexX1}
	\sw(\sigma_i)=-u_i(u_1+u_2)
\end{equation}
for $i=1,2$ and 
\begin{equation}\label{eqn:vertexX2}
	\sw(\sigma_0)=u_1u_2.
\end{equation}

By \cite[Theorem 73]{Liu13}, we have the following Proposition:
\begin{proposition}\label{prop:locX}
	We have the following graph sum formula：
	\begin{eqnarray*}
	&&\left\langle  \tau_{a_1}(\gamma_1),\cdots,\tau_{a_n}(\gamma_n)\right\rangle ^{X,T}_{0,\beta}\\
	&=&\sum_{\vGa\in G_{n}(X,\beta)}\frac{1}{|\Aut(\vGa)|}\prod_{e\in E(\Gamma)}\frac{\sh(\tau_e,d_e)}{d_e}\prod_{v\in V(\Gamma)}\big(\sw(\sigma_v)^{\val(v)-1}\prod_{i\in S_v}i^*_{\sigma_v}\gamma_i \big)\\
	&&\cdot \prod_{v\in V(\Gamma)}\int_{\Mbar_{0,E_{v}\cup S_{v}}}\frac{\prod_{i\in S_v}\psi_i^{a_i}}{\prod_{e\in E_{v}}(w_{(e,v)}-\psi_{(e,v)})}.
	\end{eqnarray*}
\end{proposition}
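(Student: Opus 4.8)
The plan is to obtain Proposition \ref{prop:locX} as a direct application of the virtual localization theorem of Graber--Pandharipande, in the form recorded as \cite[Theorem 73]{Liu13}, to the $T$-action on $\Mbar_{0,n}(X,\beta)$, after which the only work is to match the abstract edge and vertex factors of that theorem with the explicit expressions $\sh(\tau_e,d_e)$ and $\sw(\sigma_v)$ in (\ref{eqn:edgeX})--(\ref{eqn:vertexX2}). First I would recall that a $T$-fixed stable map has image in the $1$-skeleton of $X$, so each irreducible component of the domain either contracts to a fixed point $p_{\sigma_j}$ or maps to an invariant line $l_{\tau_i}$ as the degree-$d_e$ cover totally ramified over the two endpoints. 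The combinatorial type of such a configuration is precisely a decorated graph $\vGa\in G_n(X,\beta)$: a vertex $v$ records the fixed point $p_{\sigma_v}$ carrying the contracted component with its marked points $S_v$, an edge $e$ records the covered line $l_{\tau_e}$ with its degree $d_e$, and $\vs$ records the distribution of marked points. This identifies the fixed locus indexed by $\vGa$ with the quotient $\big(\prod_{v\in V(\Gamma)}\Mbar_{0,E_v\cup S_v}\big)/\Aut(\vGa)$, up to the $\bmu_{d_e}$-gerbe structure on the edges, which is exactly what produces the normalizations $1/|\Aut(\vGa)|$ and $1/d_e$.

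Next I would compute the edge contribution. For an edge $e$ labelled by $\tau_i$ with degree $d=d_e$, the bundle $f^*TX$ restricts $T$-equivariantly to $f^*Tl_{\tau_i}\oplus f^*N_{l_{\tau_i}/X}$, so the edge factor splits into a tangent and a normal part. The tangent part is the inverse Euler class of the moving part of $H^0(f^*Tl_{\tau_i})=H^0(\cO(2d))$; reading off its weights $\tfrac{m}{d}u_i$ for $m=\pm1,\dots,\pm d$ gives exactly the universal $\bP^1$ factor $\tfrac{(-1)^d d^{2d}}{(d!)^2u_i^{2d}}$ already appearing in Proposition \ref{prop:locP1}. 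For the normal part I would first check from the fan that $v_2+v_3=v_1$ and $v_1+v_4=v_2$, so both $l_{\tau_1}$ and $l_{\tau_2}$ have self-intersection $-1$ and hence $N_{l_{\tau_i}/X}\cong\cO(-1)$; consequently $H^0(f^*N)=0$ and only the obstruction $H^1(f^*N)$, of dimension $d-1$, contributes to the numerator. Reading off its weights from the values $\sw(\tau_j,\sigma_v)$ at the two endpoints and reindexing $j\mapsto d-j$ converts the product into $\prod_{j=1}^{d-1}\big(-u_1-u_2+j\tfrac{u_i}{d}\big)$. Multiplying the two parts yields $\sh(\tau_i,d)$ as in (\ref{eqn:edgeX}), the leftover $1/d_e$ being the contribution of the $\bmu_{d_e}$-automorphisms of the cover.

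For the vertex contribution at a vertex $v$ over $p_{\sigma_v}$, the relevant component is contracted, so $f^*TX$ is the trivial bundle with fiber $T_{p_{\sigma_v}}X$; the vertex factor of \cite[Theorem 73]{Liu13} then takes the form $e_T(T_{p_{\sigma_v}}X)^{\val(v)-1}\int_{\Mbar_{0,E_v\cup S_v}}\tfrac{\prod_{i\in S_v}\psi_i^{a_i}}{\prod_{e\in E_v}(w_{(e,v)}-\psi_{(e,v)})}$, where the half-edge weight $w_{(e,v)}=\sw(\tau_e,\sigma_v)/d_e$ is the source-curve tangent weight at the node. Reading off the two tangent weights at each fixed point from the list of $\sw(\tau_i,\sigma_j)$ in Section \ref{sec:geometryX} gives $e_T(T_{p_{\sigma_0}}X)=u_1u_2$, $e_T(T_{p_{\sigma_1}}X)=-u_1(u_1+u_2)$ and $e_T(T_{p_{\sigma_2}}X)=-u_2(u_1+u_2)$, which are precisely $\sw(\sigma_0),\sw(\sigma_1),\sw(\sigma_2)$ of (\ref{eqn:vertexX1})--(\ref{eqn:vertexX2}); the restrictions $i^*_{\sigma_v}\gamma_i$ of the insertions are then assembled over the fixed vertex moduli.

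Assembling the edge and vertex factors over all $\vGa\in G_n(X,\beta)$ and dividing by $|\Aut(\vGa)|$ produces the stated formula. The step I expect to require the most care is the edge computation: one must fix consistently the $T$-weight of the local coordinate on the source curve under the degree-$d$ cover (so that $w_{(e,v)}=\sw(\tau_e,\sigma_v)/d_e$), keep the map-deformation space $H^0(f^*Tl_{\tau_i})$ and the domain node-smoothing $w_{(e,v)}-\psi_{(e,v)}$ carefully separated to avoid double counting the coinciding weights, verify via the fan that both invariant lines are $(-1)$-curves so that $H^0(f^*N)=0$, and confirm that the reindexing of the $H^1(f^*N)$-weights produces exactly the shape in (\ref{eqn:edgeX}). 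Given the explicit weights listed in Section \ref{sec:geometryX}, the vertex and automorphism bookkeeping is then routine.
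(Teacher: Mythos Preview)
Your proposal is correct and follows exactly the paper's approach: the paper simply cites \cite[Theorem~73]{Liu13} without further comment, and what you have written is precisely the routine unpacking of that citation for the specific toric surface $X$, matching the abstract edge and vertex weights to the explicit quantities in (\ref{eqn:edgeX})--(\ref{eqn:vertexX2}). Your check that $l_{\tau_1}$ and $l_{\tau_2}$ are $(-1)$-curves via the fan relations $v_2+v_3=v_1$ and $v_1+v_4=v_2$, and your identification $e_T(T_{p_{\sigma_j}}X)=\sw(\sigma_j)$ from the listed tangent weights, are the only substantive verifications needed and are both correct.
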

In Proposition \ref{prop:locX}, we still used \eqref{eqn:unstable} for the convention of the unstable integrals.

\section{Open/closed correspondence}\label{sec:correspondence}
Consider the graph sum formula for $\left\langle  \tau_{a_1}(\phi_{\alpha_1}),\cdots,\tau_{a_n}(\phi_{\alpha_n})\right\rangle ^{(\bP^1,L),S^1}_{\beta}$ in Proposition \ref{prop:locP1}. Given $\vf\in G_{n}(\bP^1,L,\beta)$, define   
\begin{eqnarray*}
	C_{\vGa}
	&=&\frac{1}{|\Aut(\vGa)|}\prod_{e\in E(\Gamma)\setminus\{e_0\}}\frac{\sh(\tau,d_e)}{d_e}\prod_{v\in V(\Gamma)}\big(\sw(\sigma_v)^{\val(v)-1}\prod_{i\in S_v}i^*_{\sigma_v}\phi_{\alpha_i} \big)\\
	&&\cdot \prod_{v\in V(\Gamma)}\int_{\Mbar_{0,E_{v}\cup S_{v}}}\frac{\prod_{i\in S_v}\psi_i^{a_i}}{\prod_{e\in E_{v}}(w_{(e,v)}-\psi_{(e,v)})}D_{\pm}(\mu).
\end{eqnarray*}
which is the contribution of the graph $\vGa$ in Proposition \ref{prop:locP1}. Consider the graph sum formula for $\left\langle\tau_{a_1}(\tphi_{\alpha_1}),\cdots,\tau_{a_n}(\tphi_{\alpha_n})\right\rangle ^{X,T}_{0,\beta}$ in Proposition \ref{prop:locX}. Given $\vf\in G_{n}(X,\beta)$, define
\begin{eqnarray*}
	\tC_{\vGa}
	&=&\frac{1}{|\Aut(\vGa)|}\prod_{e\in E(\Gamma)}\frac{\sh(\tau_e,d_e)}{d_e}\prod_{v\in V(\Gamma)}\big(\sw(\sigma_v)^{\val(v)-1}\prod_{i\in S_v}i^*_{\sigma_v}\tphi_{\alpha_i} \big)\\
	&&\cdot \prod_{v\in V(\Gamma)}\int_{\Mbar_{0,E_{v}\cup S_{v}}}\frac{\prod_{i\in S_v}\psi_i^{a_i}}{\prod_{e\in E_{v}}(w_{(e,v)}-\psi_{(e,v)})}.
\end{eqnarray*}
which is the contribution of the graph $\vGa$ in Proposition \ref{prop:locX}.

\begin{lemma}\label{lem:vanish}
For $\vGa\in G_{n}(X,\beta)^k$, we have 
	$$
	\tC_{\vGa}=\sum_{a\geq k-1} (u_1+u_2)^af_{a,\vGa}(u_1),
	$$
for some functions $f_{a,\vGa}(u_1)$. Here we view $\tC_{\vGa}$ as a function of $u_1$ and $u_1+u_2$.
\end{lemma}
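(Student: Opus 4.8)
The statement asserts that the localization contribution $\tC_{\vGa}$ of a graph $\vGa \in G_n(X,\beta)^k$, viewed as a rational function in the two variables $u_1$ and $s := u_1+u_2$, is divisible by $s^{k-1}$. The strategy is to track the powers of $s = u_1+u_2$ contributed by each factor in the explicit formula for $\tC_{\vGa}$, using the combinatorial meaning of $k = |V_0(\vGa)| - |V_0^{1,1}(\vGa)|$, and to show the net exponent of $s$ is at least $k-1$. First I would set up bookkeeping: the factors in $\tC_{\vGa}$ are (i) the edge factors $\sh(\tau_e,d_e)/d_e$, (ii) the vertex weight factors $\sw(\sigma_v)^{\val(v)-1}$, (iii) the restrictions $i^*_{\sigma_v}\tphi_{\alpha_i}$, and (iv) the Hodge-type integrals $\int_{\Mbar_{0,E_v\cup S_v}} \prod \psi_i^{a_i}/\prod(w_{(e,v)}-\psi_{(e,v)})$. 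I would compute the order of vanishing (or pole order) in $s$ of each, with special attention to the fact that $\sw(\sigma_1) = -u_1 s$ and $\sw(\sigma_2) = -u_2 s$ each carry one factor of $s$, while $\sw(\sigma_0) = u_1 u_2$ carries none; and that $\sh(\tau_i,d)$ carries a factor $\prod_{j=1}^{d-1}(-s + j u_i/d)$, which contributes one power of $s$ for each edge of degree $d \geq 2$ but none for degree-$1$ edges.

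\medskip\noindent
The key computation is to assemble these contributions vertex-by-vertex. For a $\sigma_1$- or $\sigma_2$-vertex $v$ of valence $\ell$, the weight factor gives $s^{\ell-1}$; the vertex integral has denominator $\prod_{e \in E_v}(w_{(e,v)} - \psi_{(e,v)})$ with $w_{(e,v)}$ a weight at a $\sigma_1$ or $\sigma_2$ point — I would check that expanding this integral produces at worst a simple pole in $s$ per edge, hence at worst $s^{-\ell}$, and combined with the restrictions $i^*_{\sigma_{1,2}}\tphi_\alpha$ (which are finite or carry positive powers of $s$ via $[p_{\sigma_j}]/(-s)$ type normalizations) the net order at such a vertex is bounded below appropriately. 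For a $\sigma_0$-vertex the weight $u_1 u_2$ carries no $s$, but the restrictions $i^*_{\sigma_0}\tphi_0$, the edge weights at $\sigma_0$ (namely $\sw(\tau_i,\sigma_0) = -u_1, -u_2$, or $-s$ for $\tau_3,\tau_4$), and the structure of $\tphi_0 = [p_{\sigma_0}]/(u_1u_2)$ need care. The crucial point is that vertices in $V_0^{1,1}(\vGa)$ — bivalent $\sigma_0$-vertices with one $\tau_1$ and one $\tau_2$ edge of equal degree — contribute zero net power of $s$ and are exactly the ones excluded from the count $k$, whereas every other $\sigma_0$-vertex, and every $\sigma_1$- or $\sigma_2$-vertex, contributes a controlled amount. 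I would organize the estimate so that each of the $k$ "special" vertices (the non-$V_0^{1,1}$ vertices among $V_0$, together with their attached structure) yields at least one power of $s$, with a single overall deficit of one accounting for the global $s^{k-1}$ rather than $s^k$ — this last deficit should come from a single Euler-class or tree-topology constraint (e.g. one "extra" edge or the $\Mbar_{0,3}$-type unstable normalization).

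\medskip\noindent
The main obstacle will be the vertex integrals over $\Mbar_{0,E_v \cup S_v}$: these are rational functions in the $w_{(e,v)}$ and hence in $u_1, u_2$, and I need sharp lower bounds on their order of vanishing in $s$, not just the naive bound from the number of denominator factors. Concretely, after expanding $1/(w-\psi) = \sum_k \psi^k/w^{k+1}$ and integrating, the result is a sum of monomials in the $1/w_{(e,v)}$ whose total degree is fixed by $\dim \Mbar_{0,E_v\cup S_v}$; I must verify that the potential pole at $s=0$ coming from those $w_{(e,v)}$ that degenerate as $s \to 0$ is no worse than claimed, using that $w_{(e,v)} = \pm u_i/d_e$ at a $\sigma_i$-vertex ($i=1,2$) stays finite and nonzero at $s=0$ while only $\sigma_0$-vertex weights involving $\tau_3,\tau_4$ hit $s$. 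Handling the unstable cases via the convention \eqref{eqn:unstable} and correctly treating valence-$1$ and valence-$2$ vertices (where no genuine integral appears) will require separate, careful case analysis. Once the per-vertex bounds are in hand, summing them along the tree and invoking $|V_0(\vGa)| - |V_0^{1,1}(\vGa)| = k$ finishes the proof; the existence of the functions $f_{a,\vGa}(u_1)$ is then automatic since $\tC_{\vGa}$ is manifestly a Laurent expression in $s$ with coefficients rational in $u_1$.
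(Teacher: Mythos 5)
Your overall strategy---expanding $\tC_{\vGa}$ factor by factor, bounding the order in $s=u_1+u_2$ of each factor, and converting the bound into $k-1$ via the tree relation and the definition $k=|V_0(\vGa)|-|V^{1,1}_0(\vGa)|$---is exactly the paper's. But several of your concrete order counts are wrong, and one of them would make the argument collapse rather than merely lose sharpness. First, the edge factor \eqref{eqn:edgeX} has order exactly $0$ in $s$: at $s=0$ the product $\prod_{j=1}^{d-1}(-s+ju_i/d)$ equals $(d-1)!\,u_i^{d-1}/d^{d-1}\neq 0$, so edges of degree $d\geq 2$ contribute no power of $s$. Second, and crucially, a vertex $v\in V^{1,1}_0(\vGa)$ does \emph{not} ``contribute zero net power of $s$'': by the unstable convention \eqref{eqn:unstable} its integral is $1/(w_1+w_2)$ with $w_1+w_2=(-u_1-u_2)/d$, i.e.\ a simple pole, order $-1$; these poles are the entire reason such vertices are subtracted in the definition of $k$. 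Third, allowing ``at worst a simple pole in $s$ per edge, hence at worst $s^{-\ell}$'' at a $\sigma_1$- or $\sigma_2$-vertex is both false and fatal if taken at face value: the weights there are $u_1/d_e$, resp.\ $u_2/d_e=(s-u_1)/d_e$, finite and nonzero at $s=0$, so those vertex integrals have order $\geq 0$; if they really could contribute $s^{-\ell}$, the factor $\sw(\sigma_v)^{\val(v)-1}$, which carries only $\val(v)-1$ powers of $s$, could not compensate and no useful lower bound would follow. (Your later remark that the weights at $\sigma_1,\sigma_2$ stay nonzero at $s=0$ contradicts this; also ``only $\sigma_0$-vertex weights involving $\tau_3,\tau_4$ hit $s$'' is off, since $\tau_3,\tau_4$ never label edges of $\vGa$ and the pole at a $V^{1,1}_0$ vertex comes from the sum $w_1+w_2$, not from any single weight.)

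With the corrected counts the assembly also differs from what you propose: it is not that each of the $k$ ``special'' vertices yields a power of $s$. The vertices of $V_0(\vGa)\setminus V^{1,1}_0(\vGa)$ contribute only order $\geq 0$; all positive powers come from the $\sigma_1,\sigma_2$ weight factors \eqref{eqn:vertexX1}, totalling $\sum_{v\in V_1(\vGa)\sqcup V_2(\vGa)}(\val(v)-1)=|E(\Gamma)|-|V_1(\vGa)|-|V_2(\vGa)|$, while the poles total $|V^{1,1}_0(\vGa)|$. Since $\Gamma$ is a tree, $|E(\Gamma)|-|V(\Gamma)|=-1$, and the order of $\tC_{\vGa}$ in $s$ is at least $|V_0(\vGa)|-|V^{1,1}_0(\vGa)|-1=k-1$. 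Finally, the restrictions $i^*_{\sigma_v}\tphi_{\alpha}$ have order $0$ (e.g.\ $i^*_{\sigma_1}\tphi_1=u_1$ by \eqref{eqn:tphi}) or kill the term outright, so no positive powers of $s$ arise from them either; once these per-factor orders are fixed, your plan does reduce to the paper's proof.
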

\begin{proof}
We count the power of $(u_1+u_2)$ in $\tC_{\vGa}$. By \eqref{eqn:edgeX}, the power of $(u_1+u_2)$ in the edge factor $\sh(\tau_e,d_e)$ is 0. By \eqref{eqn:vertexX1} and \eqref{eqn:vertexX2}, when $\sigma_v=\sigma_1$ or $\sigma_v=\sigma_2$, the power of $(u_1+u_2)$ in the vertex factor $\sw(\sigma_v)^{\val(v)-1}$ is $\val(v)-1$. When $\sigma_v=\sigma_0$, the power of $(u_1+u_2)$ in the vertex factor $\sw(\sigma_v)^{\val(v)-1}$ is 0. By \eqref{eqn:tphi}, the power of $(u_1+u_2)$ in the vertex factor $i^*_{\sigma_v}\tphi_{\alpha_i}$ is 0.

For the vertex factor $\int_{\Mbar_{0,E_{v}\cup S_{v}}}\frac{\prod_{i\in S_v}\psi_i^{a_i}}{\prod_{e\in E_{v}}(w_{(e,v)}-\psi_{(e,v)})}$, it is easy to see that the power of $(u_1+u_2)$ is non-negative when $v\notin V^{1,1}_0(\vGa)$. When $v\in V^{1,1}_0(\vGa)$, the power of $(u_1+u_2)$ is -1 by \eqref{eqn:unstable}.

Therefore the power of $(u_1+u_2)$ in $\tC_{\vGa}$ is greater or equal to 
\begin{eqnarray*}
&&\sum_{v\in  V_1(\vGa)\sqcup V_2(\vGa)}(\val(v)-1)-|V^{1,1}_0(\vGa)|=|E(\vGa)|-|V_1(\vGa)|-|V_2(\vGa)|-|V^{1,1}_0(\vGa)|\\
&=&|E(\vGa)|-|V(\vGa)|+|V_0(\vGa)|-|V^{1,1}_0(\vGa)|\\
&=&-1+|V_0(\vGa)|-|V^{1,1}_0(\vGa)|,
\end{eqnarray*}
where in the last equality we use the fact that $\Gamma$ is a tree and hence $|E(\vGa)|-|V(\vGa)|+1=0$. Notice that for $\vGa\in G_{n}(X,\beta)^k$, $|V_0(\vGa)|-|V^{1,1}_0(\vGa)|=k$. The lemma follows immediately.

\end{proof}

\begin{lemma}\label{lem:G11}
	Let $\beta=(d_1,d_2),d_1\neq d_2$ and let $\mu=|d_1-d_2|$. Then we have 
	$$
	\left\langle  \tau_{a_1}(\phi_{\alpha_1}),\cdots,\tau_{a_n}(\phi_{\alpha_n})\right\rangle ^{(\bP^1,L),S^1}_{(0,1),\beta}=(-1)^{\mu+1}\mu\sum_{\vGa\in G_{n}(X,\beta)^{1,1}}\tC_{\vGa}\vert_{u_1+u_2=0,u_1=u}.
	$$
\end{lemma}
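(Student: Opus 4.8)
The plan is to compare the two graph sum formulas in Propositions \ref{prop:locP1} and \ref{prop:locX} term by term, after establishing a bijection between the relevant sets of decorated graphs. Observe that a graph $\vGa\in G_{n}(X,\beta)^{1,1}$ has $V_0(\vGa)=V^{1,1}_0(\vGa)\cup\{v_*\}$ with $\val(v_*)=1$, so $v_*$ is a univalent $\sigma_0$-vertex attached to a single edge $e_*$; and the remaining $\sigma_0$-vertices are all bivalent with one $\tau_1$-edge and one $\tau_2$-edge of equal degree. Because $d_1\neq d_2$, such a graph is forced to be a ``caterpillar'': a chain of equal-degree $\tau_1$/$\tau_2$ edges through the $V^{1,1}_0$ vertices, capped at one end by the univalent $v_*$ and at the other by a vertex labeled $\sigma_1$ or $\sigma_2$ carrying all the marked points and the remaining degree. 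I would first argue that, after contracting the chain, these graphs are in natural bijection with the graphs $\vGa'\in G_{n}(\bP^1,L,\beta)$ appearing in Proposition \ref{prop:locP1}: the univalent $v_*$ corresponds to the root $v_0$, the chain of $V^{1,1}_0$ vertices corresponds to the single edge $e_0$ of $\vGa'$ whose degree is $\mu=|d_1-d_2|$, and the rest of the $X$-graph maps isomorphically to the part of the $\bP^1$-graph not containing $e_0$.

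Next I would compute, for a fixed such $\vGa$, the specialization $\tC_{\vGa}\vert_{u_1+u_2=0,\,u_1=u}$ and match it against $C_{\vGa'}$. By Lemma \ref{lem:vanish}, a graph in $G_{n}(X,\beta)^{1}$ contributes a leading term of order $(u_1+u_2)^{0}$, so the specialization is finite and nonzero only from the degree-zero part; this is exactly where the $\val(v_*)=1$ condition (hence $k=1$) is used. Setting $u_1+u_2=0$: the edge factors $\sh(\tau_i,d_e)$ specialize to $\sh(\tau,d_e)$ since the product $\prod_{j=1}^{d-1}(-u_1-u_2+ju_i/d)$ becomes $\prod_{j=1}^{d-1}(ju_i/d)=(d-1)!\,u_i^{d-1}/d^{d-1}$, which I would track carefully. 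The vertex weights $\sw(\sigma_1)=-u_1(u_1+u_2)$ and $\sw(\sigma_2)=-u_2(u_1+u_2)$ both vanish to order one, but each non-root, non-$V^{1,1}_0$ vertex carries $\sw(\sigma_v)^{\val(v)-1}$ and exactly one compensating factor of $(u_1+u_2)^{-1}$ comes from the $V^{1,1}_0$ chain via the unstable integral convention \eqref{eqn:unstable}; the bookkeeping that the total power is $0$ is precisely the $k=1$ case of the computation in the proof of Lemma \ref{lem:vanish}. Under $u_1=u$, $\sw(\sigma_1)$-vertices become $\sw(\sigma_+)=u$ vertices and $\sw(\sigma_2)$-vertices become $\sw(\sigma_-)=-u$ vertices. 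The classes $i^*_{\sigma_v}\tphi_{\alpha_i}$ specialize to $i^*_{\sigma_v}\phi_{\alpha_i}$ by \eqref{eqn:tphi} versus the relations for $\phi_\alpha$ in Section \ref{sec:geometryP1}. Finally the chain of $V^{1,1}_0$ vertices of common degree, together with its internal edge factors and unstable integrals, collapses to a single factor which I expect to equal $\frac{\sh(\tau,\mu)}{\mu}\cdot\frac{1}{(-1)^{\mu+1}\mu}\cdot D_\pm(\mu)^{-1}$ times the edge data of $e_0$ — that is, the discrepancy between summing over all $X$-chains of length $\mu$ and the single $\bP^1$-edge $e_0$ of degree $\mu$ accounts exactly for the prefactor $(-1)^{\mu+1}\mu$ and for the $D_\pm(\mu)$ appearing in $C_{\vGa'}$.

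More precisely, I would isolate the contribution of the univalent vertex $v_*$ and its edge $e_*$ together with the adjacent $V^{1,1}_0$ chain: the univalent $\sigma_0$-vertex contributes $\sw(\sigma_0)^{0}=1$ and an unstable integral $\int_{\Mbar_{0,1}}\frac{1}{w_{(e_*,v_*)}-\psi}=\frac{1}{w_{(e_*,v_*)}^2}$, with $w_{(e_*,v_*)}=\pm u_i/d_{e_*}$; each interior $V^{1,1}_0$ vertex is bivalent and contributes $\sw(\sigma_0)^{1}\int_{\Mbar_{0,2}}\frac{1}{(w_1-\psi_1)(w_2-\psi_2)}=\frac{u_1u_2}{w_1+w_2}$; multiplying the chain of these with the edge factors $\prod\frac{\sh(\tau_i,d)}{d}$ along it and then specializing $u_1+u_2=0$, $u_1=u$ should produce exactly $\frac{\sh(\tau,\mu)}{\mu}$ divided by the $\bP^1$-side normalization, yielding the stated identity after summing over how the common chain-degree $d$ and chain-length combine to give total profile $\mu$. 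I would phrase this as: the map $\vGa\mapsto\vGa'$ is a bijection onto $G_n(\bP^1,L,\beta)$, and under it $\tC_{\vGa}\vert_{u_1+u_2=0,u_1=u}=\frac{(-1)^{\mu+1}}{\mu}\,C_{\vGa'}$ (or the reciprocal of the stated constant, depending on orientation of the matching), whence summing over $\vGa\in G_n(X,\beta)^{1,1}$ and invoking Proposition \ref{prop:locP1} gives the result.

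The main obstacle I anticipate is the combinatorial-plus-sign bookkeeping in collapsing the $V^{1,1}_0$ chain: one must correctly sum over all lengths $r\geq 1$ of chains of bivalent $\sigma_0$-vertices with a common edge-degree $d$ (so that the chain has ``thickness'' $\mu=rd$ in the appropriate sense — actually here each $\tau_1$ and $\tau_2$ edge in the chain has equal degree, so the chain transports degree $d$ in the $\tau_1$-direction and $d$ in the $\tau_2$-direction, and the univalent cap plus the remainder carry the asymmetry $\mu$), reconcile the resulting sum of products of $\sh(\tau_i,d)/d$ and unstable integrals with the single closed-form factor $\sh(\tau,\mu)/\mu$, and verify that the automorphism factors $|\Aut(\vGa)|$ on the $X$-side reorganize into $|\Aut(\vGa')|$ together with the numerical factor $\mu$ (a chain of length $r$ with all data equal contributes an automorphism, etc.). Getting the power of $-1$ right — tracking $(-1)^{d}$ from each $\sh$, the $(-1)^\mu$ in $D_-(\mu)$, and the overall $(-1)^{\mu+1}$ — is the delicate point, and I would verify it on a low-degree example (e.g. $\beta=(1,0)$ and $\beta=(2,0)$) before asserting the general matching.
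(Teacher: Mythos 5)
Your overall strategy — a term-by-term comparison of the two localization formulas under a bijection of decorated graphs, with Lemma \ref{lem:vanish} guaranteeing that only the $(u_1+u_2)^0$ part of $\tC_{\vGa}$ survives the specialization — is the right one, and your guessed per-graph constant ($C$ of the open graph $=(-1)^{\mu+1}\mu$ times the specialized $\tC_{\vGa}$) is correct. But the combinatorial core of your argument is wrong. A graph $\vGa\in G_{n}(X,\beta)^{1,1}$ is \emph{not} forced to be a chain: the $\sigma_1$- and $\sigma_2$-vertices may have arbitrary valence, so $\vGa$ is a general tree in which every edge joins a $\sigma_0$-vertex to a $\sigma_1$- or $\sigma_2$-vertex, the $\sigma_0$-vertices being the bivalent ones in $V^{1,1}_0(\vGa)$ together with the single univalent $v_*$. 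For instance, with $\beta=(3,2)$ and $n=0$, take a $\sigma_1$-vertex joined to $v_*$ by a degree-$1$ $\tau_1$-edge and to two distinct bivalent $\sigma_0$-vertices, each of which is joined to its own $\sigma_2$-vertex: this lies in $G_{0}(X,\beta)^{1,1}$ and is not a chain. Correspondingly, the open graphs in $G_{n}(\bP^1,L,\beta)$ are general trees (with branching at $\sigma_\pm$-vertices and marked points distributed arbitrarily), so a chain-only matching would miss most terms of Proposition \ref{prop:locP1}.

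The correct bijection is local rather than a chain contraction: \emph{each} bivalent vertex $v\in V^{1,1}_0(\vGa)$, together with its two equal-degree edges, is contracted to a single \emph{non-root} edge of the open graph of that same degree, while the root edge $e_0$ corresponds to the one edge $e_*$ at $v_*$, whose degree is automatically $\mu$ because every $V^{1,1}_0$-vertex contributes equally to the $\tau_1$- and $\tau_2$-degrees, so the whole asymmetry $d_1-d_2$ is carried by $e_*$. In particular, your plan to ``sum over all lengths $r\geq 1$ of chains of bivalent $\sigma_0$-vertices of common degree $d$'' and to reconcile such a sum with the single factor $\sh(\tau,\mu)/\mu$ addresses a problem that does not arise: no resummation identity is needed, and the degrees of the $V^{1,1}_0$-edges bear no relation to $\mu$. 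What does need checking, factor by factor, is (i) that for each bivalent $\sigma_0$-vertex the product $\frac{\sh(\tau_1,d)}{d}\cdot\frac{\sh(\tau_2,d)}{d}\cdot\sw(\sigma_0)\int_{\Mbar_{0,2}}\frac{1}{(w_1-\psi_1)(w_2-\psi_2)}$ specializes to the open edge factor $\sh(\tau,d)/d$ up to the $u$-powers and signs absorbed by the discrepancy between $\sw(\sigma_{1,2})$ and $\sw(\sigma_\pm)$, and (ii) that $e_*$ together with the univalent-vertex factor $\int_{\Mbar_{0,1}}\frac{1}{w-\psi}=w^{-2}$ reproduces $D_\pm(\mu)$ up to the constant $(-1)^{\mu+1}\mu$. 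Note also that your assertion that $\sh(\tau_i,d)$ itself specializes to $\sh(\tau,d)$ is false: at $u_1+u_2=0$, $u_1=u$ one gets $(-1)^d d^{d}/(d!\,u^{d+1})$, not $(-1)^d d^{2d}/((d!)^2u^{2d})$; the match only holds for the combined local contributions just described. As written, the proposal would not yield the lemma.
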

\begin{proof}
	We prove the lemma by comparing the graph sum formulae in Proposition \ref{prop:locP1} and Proposition \ref{prop:locX}. We first construct a bijection 
	$$
	\varphi:G_{n}(X,\beta)^{1,1}\to G_{n}(\bP^1,L,\beta)
	$$
	as follows. Given $\vGa\in G_{n}(X,\beta)^{1,1}$, we remove all the vertices in $V^{1,1}_0(\vGa)$ and combine the edges $e_1,e_2$ into one edge $e$ for any $e_1,e_2$ attaching to a vertex $v\in V^{1,1}_0(\vGa)$. The degree $d_e$ is defined to be $d_{e_1}=d_{e_2}$. The label $\sigma_v$ for $v\in V_1(\vGa)$ is replaced by $\sigma_+$ and the label $\sigma_v$ for $v\in V_2(\vGa)$ is replaced by $\sigma_-$. The label of the unique vertex $v_*\in V_0(\vGa)\setminus V^{1,1}_0(\vGa)$ is removed. The resulting decorated graph is denoted by $\varphi(\vGa)$. The unique vertex $v_*\in V_0(\vGa)\setminus V^{1,1}_0(\vGa)$ corresponds to the root in $\varphi(\vGa)$. Also notice that the degree of the unique edge $e$ attaching to $v_*$ is $\mu$. It is easy to see that $\varphi$ is a bijection.
	
	Now let us compare the contributions $\tC_{\vGa}$ and $C_{\varphi(\vGa)}$. First we cancel the powers of $-u_1-u_2$ coming from the factors $\sw(\sigma_v)^{\val(v)-1},v\in V_1(\vGa)\sqcup V_2(\vGa)$ and from the factors $\int_{\Mbar_{0,E_{v}\cup S_{v}}}\frac{\prod_{i\in S_v}\psi_i^{a_i}}{\prod_{e\in E_{v}}(w_{(e,v)}-\psi_{(e,v)})}, v\in V^{1,1}_0(\vGa)$ in $\tC_{\vGa}$ (see Lemma \ref{lem:vanish}). Then we apply the restriction $u_1+u_2=0$ to each factor in $\tC_{\vGa}$. By Proposition \ref{prop:locP1} and Proposition \ref{prop:locX}, it is easy to check that $C_{\varphi(\vGa)}=(-1)^{\mu+1}\mu\tC_{\vGa}$. The lemma then follows from Proposition \ref{prop:locP1}. 
\end{proof}

\begin{theorem}\label{thm:correspondence}
	Let $\beta=(d_1,d_2)\in\bZ_{\geq 0}^2,d_1\neq d_2$. Then for $\alpha_1,\cdots,\alpha_n\in\{1,2\},a_1,\cdots,a_n\in\bZ_{\geq 0}$, we have
	\begin{eqnarray*}
		&&\left\langle  \tau_{a_1}(\tphi_{\alpha_1}),\cdots,\tau_{a_n}(\tphi_{\alpha_n})\right\rangle ^{X,T}_{0,\beta}\vert_{u_1+u_2=0,u_1=u}\\
		&=&\sum \frac{(-u^2)^{l+m-1}}{|\Aut(\mu^1,d^1,A^1)||\Aut(\mu^2,d^2,A^2)|}\prod_{i=1}^{l}\big(\frac{\mu^1_i}{-u}\big)^{1+b_i}\prod_{j=1}^{m}\big(\frac{\mu^2_j}{u}\big)^{1+c_j}\frac{(l+m-3)!}{\prod_{i=1}^{l}b_i!\prod_{j=1}^{m}c_j!}\\
		&&\cdot\prod_{i=1}^{l}\big(\frac{(-1)^{\mu^1_i}}{u}\left\langle  \prod_{k\in A^1_i}\tau_{a_k}(\phi_{\alpha_k})\right\rangle ^{(\bP^1,L),S^1}_{(0,1),\beta^1_i}\big)
		\prod_{j=1}^{m}\big(\frac{(-1)^{\mu^2_j+1}}{u}\left\langle  \prod_{k\in A^2_j}\tau_{a_k}(\phi_{\alpha_k})\right\rangle ^{(\bP^1,L),S^1}_{(0,1),\beta^2_j}\big).
	\end{eqnarray*}
	Here the sum is taken over $\{((\mu^1_i,d^1_i,A^1_i,b_i)_{1\leq i\leq l},(\mu^2_j,d^2_j,A^2_j,c_j)_{1\leq j\leq m},)\mid l,m\geq 0, l+m\geq 1,\mu^1_1\neq \mu^2_1 \textrm{ when } l=m=1, \mu^1_i,\mu^2_j>0, d^1_i,d^2_j,b_i,c_j\geq 0, \sum_{i=1}^{l}(d^1_i+\mu^1_i)+\sum_{j=1}^{m}d^2_j=d_1, \sum_{i=1}^{l}d^1_i+\sum_{j=1}^{m}(d^2_j+\mu^2_j)=d_2,\sqcup_{i=1}^{l}A^1_i\sqcup \sqcup_{j=1}^{m}A^2_j=\{1,\cdots,n\},\sum_{i=1}^{l}b_i+\sum_{j=1}^{m}c_j=l+m-3\}$ and $ \beta^1_i=(d^1_i+\mu^1_i,d^1_i), \beta^2_j=(d^2_j,d^2_j+\mu^2_j)$. The automorphism groups $\Aut(\mu^1,d^1,A^1)$ and $\Aut(\mu^2,d^2,A^2)$ are for the tuples $\big((\mu^1,d^1,A^1)_{1\leq i\leq l} \big)$ and $\big((\mu^2,d^2,A^2)_{1\leq j\leq m} \big)$ respectively.
	
\end{theorem}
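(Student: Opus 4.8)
The plan is to compare the two virtual localization graph sums, Proposition~\ref{prop:locX} for $X$ and Proposition~\ref{prop:locP1} for $(\bP^1,L)$, repeating the one–arm analysis of Lemma~\ref{lem:G11} but now allowing the decorated graph of $X$ to branch arbitrarily at the central $\sigma_0$–vertex. \textbf{Step 1 (reduction).} Expand $\langle\tau_{a_1}(\tphi_{\alpha_1}),\dots,\tau_{a_n}(\tphi_{\alpha_n})\rangle^{X,T}_{0,\beta}=\sum_{\vGa\in G_n(X,\beta)}\tC_{\vGa}$ and restrict to $u_1+u_2=0$. By Lemma~\ref{lem:vanish} the summand $\tC_{\vGa}$ has a zero of order $\geq k-1$ in $u_1+u_2$ when $\vGa\in G_n(X,\beta)^k$, and $G_n(X,\beta)^0=\emptyset$ by \eqref{eqn:empty} since $d_1\neq d_2$; hence only the graphs in $G_n(X,\beta)^1=\bigsqcup_{l'\geq1}G_n(X,\beta)^{1,l'}$ survive, each contributing the (regular) value at $u_1+u_2=0$ of its rational function. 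Moreover $i^*_{\sigma_0}\tphi_1=i^*_{\sigma_0}\tphi_2=0$, so any graph carrying a marked point on a $\sigma_0$–vertex contributes $0$; in all remaining graphs the marked points lie on $\sigma_1$– and $\sigma_2$–vertices only.

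\textbf{Step 2 (arms).} For $\vGa\in G_n(X,\beta)^{1,l'}$ let $v_*$ be the unique $\sigma_0$–vertex outside $V^{1,1}_0(\vGa)$; it has valence $l'$. Removing $v_*$ and keeping, for each edge $e$ of $v_*$, the connected piece of $\vGa\setminus\{v_*\}$ attached to $e$, produces $l'$ \emph{arms}. On each arm perform the operation of Lemma~\ref{lem:G11}: contract every $V^{1,1}_0$–vertex, merging its equal–degree $\tau_1$– and $\tau_2$–edge into a single $\tau$–edge; relabel $\sigma_1\mapsto\sigma_+$, $\sigma_2\mapsto\sigma_-$; and attach a fresh univalent root at the $v_*$–end of $e$. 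This turns the arm into a decorated graph for $(\bP^1,L)$ with root–edge (the image of) $e$, of degree $\mu$ say; its degree is $(d+\mu,d)$ if $e$ is a $\tau_1$–edge and $(d,d+\mu)$ if $e$ is a $\tau_2$–edge, where $d$ records the $\tau_2$– resp.\ $\tau_1$–content of the arm. Reversing the construction (blowing up each $\tau$–edge into a $\tau_1$–$\tau_2$ pair through a new $V^{1,1}_0$–vertex, and gluing the $l'$ root–edges to one common $\sigma_0$–vertex) is a two–sided inverse, so this identifies $G_n(X,\beta)^{1,l'}$ with the set of $l'$–tuples of $(\bP^1,L)$–decorated graphs sharing a gluing vertex, and the constraint $\sum_e d_e[l_{\tau_e}]=\beta$ becomes precisely the index–set constraints of the theorem, with $l$ (resp.\ $m$) the number of $\tau_1$– (resp.\ $\tau_2$–) arms.

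\textbf{Step 3 (factors and assembly).} Write $\tC_{\vGa}=\tfrac{1}{|\Aut(\vGa)|}\cdot(\text{$v_*$–factor})\cdot\prod_{i}(\text{arm factor of }\vGa_i)$, where the $v_*$–factor is $\sw(\sigma_0)^{l'-1}\int_{\Mbar_{0,E_{v_*}}}\prod_{e\in E_{v_*}}(w_{(e,v_*)}-\psi_{(e,v_*)})^{-1}$. Each arm factor is balanced in $u_1+u_2$ — the positive powers from $\sw(\sigma_1)^{\val-1},\sw(\sigma_2)^{\val-1}$ exactly cancelling the negative powers from the contracted $V^{1,1}_0$–vertices, as in Lemmas~\ref{lem:vanish} and \ref{lem:G11} — and after this cancellation and the substitution $u_1+u_2=0,\ u_1=u$ the arm factor of $\vGa_i$ becomes $C_{\vGa_i'}$ times an explicit scalar; the bookkeeping is identical to the proof of Lemma~\ref{lem:G11} (on the $X$–side the $v_*$–edge carries $\sh(\tau_{1,2},\mu)$, on the $(\bP^1,L)$–side the disk factor $D_\pm(\mu)$ and no edge factor, and the residual sign mismatches between $\sw(\sigma_{0,1,2})$ and $\sw(\sigma_\pm)$ and between $\sh(\tau_1,d)\sh(\tau_2,d)$ and $\sh(\tau,d)$ cancel), and one finds the scalar to be $(-1)^{\mu^1_i}/u$ for a $\tau_1$–arm and $(-1)^{\mu^2_j+1}/u$ for a $\tau_2$–arm. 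For the $v_*$–factor, \eqref{eqn:vertexX2} gives $\sw(\sigma_0)^{l'-1}\vert_{u_1+u_2=0,u_1=u}=(-u^2)^{l'-1}$, the weights are $w_{(e_i,v_*)}=\sw(\tau_1,\sigma_0)/\mu^1_i\mapsto-u/\mu^1_i$ and $w_{(e_j,v_*)}=\sw(\tau_2,\sigma_0)/\mu^2_j\mapsto u/\mu^2_j$, and $\int_{\Mbar_{0,l'}}\prod_e\psi_e^{k_e}=(l'-3)!/\prod_e k_e!$ for $\sum_e k_e=l'-3$ (with the conventions \eqref{eqn:unstable} when $l'\leq2$); hence the $v_*$–factor equals $(-u^2)^{l'-1}\sum_{(b_i),(c_j)}\tfrac{(l'-3)!}{\prod b_i!\prod c_j!}\prod_i\big(\tfrac{\mu^1_i}{-u}\big)^{1+b_i}\prod_j\big(\tfrac{\mu^2_j}{u}\big)^{1+c_j}$, the combinatorial prefactor of the theorem. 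Summing over $l'$ and over $\vGa$, and reorganizing the sum into: the pair $(l,m)$; the arm data $(\mu^1_i,d^1_i,A^1_i)_i$, $(\mu^2_j,d^2_j,A^2_j)_j$ of Step 2; the exponents $(b_i),(c_j)$; and the $(\bP^1,L)$–graph structure on each individual arm — the last sum assembles, via Proposition~\ref{prop:locP1} applied with marking set $A^1_i$ (resp.\ $A^2_j$) and degree $\beta^1_i$ (resp.\ $\beta^2_j$), into the open invariants on the right–hand side, while the passage from $1/|\Aut(\vGa)|$ to $1/(|\Aut(\mu^1,d^1,A^1)|\,|\Aut(\mu^2,d^2,A^2)|)$ (the remaining arm automorphisms being swallowed by the open invariants) is the usual exponential–formula bookkeeping. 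Collecting everything gives the claimed identity; and the restriction ``$\mu^1_1\neq\mu^2_1$ when $l=m=1$'' is exactly the statement that a valence–$2$ vertex with a $\tau_1$– and a $\tau_2$–edge of equal degree lies in $V^{1,1}_0$, hence is never the $v_*$ of a graph in $G_n(X,\beta)^{1,2}$ — consistently with $G_n(X,\beta)^0=\emptyset$ when $d_1=d_2$.

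\textbf{Expected difficulty.} The conceptual input is already contained in Lemmas~\ref{lem:vanish} and \ref{lem:G11}; the obstacle is organizational. The two delicate points are (i) the per–arm scalar in Step~3, where one must chase all the factors $\sh$, $D_\pm$, $\sw$ and the systematic cancellation of auxiliary powers of $u_1+u_2$ so as to extract exactly $(-1)^{\mu^1_i}/u$ and $(-1)^{\mu^2_j+1}/u$; and (ii) the combinatorial reorganization, where the automorphism weight $1/|\Aut(\vGa)|$ of the glued graph and the multinomial coefficient $(l+m-3)!/\prod b_i!\prod c_j!$ arising from the $\Mbar_{0,l'}$–integral at $v_*$ must be reconciled, across the ordering of the arms, with $|\Aut(\mu^1,d^1,A^1)|$ and $|\Aut(\mu^2,d^2,A^2)|$.
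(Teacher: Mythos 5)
Your proposal is correct and follows essentially the same route as the paper: reduce to $G_{n}(X,\beta)^{1}$ via Lemma \ref{lem:vanish} and \eqref{eqn:empty}, cut the graph at the distinguished $\sigma_0$-vertex $v_*$ into arms, convert each arm into an open $(\bP^1,L)$ contribution via (the bijection underlying) Lemma \ref{lem:G11}, and evaluate the $v_*$-factor by the $\psi$-integrals on $\Mbar_{0,l+m}$ with the conventions \eqref{eqn:unstable}. The only difference is presentational — you contract each arm directly to a $(\bP^1,L)$-graph, while the paper caps each arm with a univalent $\sigma_0$-vertex so that it lies in $G_{|A^k_i|}(X,\beta^k_i)^{1,1}$ and then invokes Lemma \ref{lem:G11} at the level of sums — and your explicit remarks (vanishing of insertions at $\sigma_0$-vertices, the origin of the constraint $\mu^1_1\neq\mu^2_1$ when $l=m=1$) are consistent with, indeed slightly more detailed than, the paper's argument.
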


\begin{proof}
	By Lemma \ref{lem:vanish} and \eqref{eqn:empty}, we have
	$$
	\left\langle  \tau_{a_1}(\tphi_{\alpha_1}),\cdots,\tau_{a_n}(\tphi_{\alpha_n})\right\rangle ^{X,T}_{0,\beta}\vert_{u_1+u_2=0,u_1=u}=\sum_{\vGa\in G_{n}(X,\beta)^{1}}\tC_{\vGa}\vert_{u_1+u_2=0,u_1=u}.
	$$
	Given $\vGa\in G_{n}(X,\beta)^{1}$, let $v_*$ be the unique vertex in $ V_0(\vGa)\setminus V^{1,1}_0(\vGa)$. Let $e^1_1,\cdots,e^1_l,e^2_1,\cdots,e^2_m$ be the edges attaching to $v_*$, where $\vf(e^1_i)=\tau_1,\vf(e^2_j)=\tau_2,\vd(e^1_i)=\mu^1_i,\vd(e^2_j)=\mu^2_j$. Remove $v_*$ and attach a univalent vertex $v^k_i$ to each $e^k_i$ and define $\vf(v^k_i)=\sigma_0$, where $(k,i)\in\{(1,1),\cdots,(1,l),(2,1),\cdots,(2,m)\}$. Since $\vGa$ is a tree, the resulting graph has $l+m$ connected components $\vGa^k_i,(k,i)\in\{(1,1),\cdots,(1,l),(2,1),\cdots,(2,m)\}$ and each $\vGa^k_i$ lies in $G_{|A^k_i|}(X,\beta^k_i)^{1,1}$ for some $A^k_i,\beta^k_i$ with $\beta^1_i=(d^1_i+\mu^1_i,d^1_i), \beta^2_j=(d^2_j,d^2_j+\mu^2_j)$ and $\sqcup_{i=1}^{l}A^1_i\sqcup \sqcup_{j=1}^{m}A^2_j=\{1,\cdots,n\}$. Therefore we have
		\begin{eqnarray*}
	&&\tC_{\vGa}\vert_{u_1+u_2=0,u_1=u}\\
	&=&\frac{(-u^2)^{l+m-1}}{|\Aut(\mu^1,d^1,A^1)||\Aut(\mu^2,d^2,A^2)|}\int_{\Mbar_{0,l+m}}\frac{1}{\prod_{i=1}^{l}(\frac{-u}{\mu^1_i}-\psi_{i})\prod_{j=1}^{m}(\frac{u}{\mu^2_j}-\psi_{l+j})}\\
	&\cdot&\prod_{i=1}^{l}(\frac{\mu^1_i}{-u_1}\tC_{\vGa^1_i}\vert_{u_1+u_2=0,u_1=u})\prod_{j=1}^{m}(\frac{\mu^2_j}{u_1}\tC_{\vGa^2_j}\vert_{u_1+u_2=0,u_1=u})
		\end{eqnarray*}
By Lemma \ref{lem:G11} and the fact that
$$
\int_{\Mbar_{0,h}}\psi_1^{s_1}\cdots \psi_h^{s_h}=\frac{(h-3)!}{\prod_{i=1}^{h}s_i!},\quad s_1+\cdots+s_h=h-3,
$$
the theorem follows immediately.
\end{proof}

\end{CJK}
\end{document}